\newcommand{\Z}{\mathbb{Z}}
\newcommand{\R}{\mathbb{R}}
\newcommand{\C}{\mathbb{C}}
\newcommand{\T}{\mathbb{T}}
\newcommand{\KK}{\bar{K}}
\newcommand{\EE}{\bar{E}}
\newcommand{\re}{\operatorname{Re}}
\newcommand{\im}{\operatorname{Im}}
\numberwithin{equation}{section}
\numberwithin{figure}{section}
\theoremstyle{plain} %text of this environment is typesetted in italics
\newtheorem{theorem}{Theorem}[section]
\newtheorem{corollary}[theorem]{Corollary}
\newtheorem{proposition}[theorem]{Proposition}
\newtheorem{conjecture}[theorem]{Conjecture}
\theoremstyle{definition} %text of this environment is typesetted in roman letters
\theoremstyle{remark}
\newtheorem{remark}[theorem]{Remark}
\def \cO{\mathcal O}
\def \cS{\mathcal S}
\def \tD {\mathrm{tD}}
\def \tDD {\mathrm{t\Delta}}
\def \tP {\mathrm{tP}}
\def \oP {\mathrm{oP}}
\def \oH {\mathrm{oH}}
\def \H {\mathrm{H}}
\def \oDD {\mathrm{o\Delta}}
\def \P {\mathrm P}
\newcommand{\cM}{\mathcal{M}}
\begin{document}

\title[The $\oH$ family]{An orthorhombic deformation family\\ of Schwarz' H surfaces}

\author{Hao Chen}
\address[Chen]{Georg-August-Universit\"at G\"ottingen, Institut f\"ur Numerische und Angewandte Mathematik}
\email{h.chen@math.uni-goettingen.de}
\thanks{H.\ Chen is supported by Individual Research Grant from Deutsche Forschungsgemeinschaft within the project ``Defects in Triply Periodic Minimal Surfaces'', Projektnummer 398759432.}

\author{Matthias Weber}
\address[Weber]{Indiana University, Department of Mathematics}
\email{matweber@indiana.edu}

\keywords{Triply periodic minimal surfaces}
\subjclass[2010]{Primary 53A10}

\date{\today}

\begin{abstract}
The classical H surfaces of H.\ A.\ Schwarz form a 1-parameter family of triply
periodic minimal surfaces (TPMS) that are usually described as close relatives
to his more famous P surface. However, a crucial distinction between these
surfaces is that the P surface belongs to a 5-dimensional smooth family of
embedded TPMS of genus three discovered by W.\ Meeks, while the H surfaces are
among the few known examples outside this family.  We construct a 2-parameter
family of embedded TPMS of genus three that contains the H family and meets the
Meeks family.  In particular, we prove that H surfaces can be deformed
continuously within the space of TPMS of genus three into Meeks surfaces.
\end{abstract}

\maketitle

\section{Introduction}

This is the second of two papers dealing with new 2-dimensional families of
embedded triply periodic minimal surfaces (TPMS) of genus three whose
1-dimensional ``intersections'' with the well-known Meeks family exhibit
singularities in the moduli space of TPMS.

Among the many TPMS discovered by H.\ A.\ Schwarz~\cite{schwarz1890} is a
1-parameter family $\H$ that can be constructed by extending the Plateau
solution for the boundaries of the two triangular faces of a triangular prism.
Such a Plateau solution does not exist for all heights of the prism.  For small
heights, there are two distinct solutions. One of them limits in the most
symmetric singly periodic Scherk surfaces with 6 annular ends. The other
degenerate to a foliation of $\R^3$ by horizontal parallel planes that are
joined by catenoidal necks, placed in a hexagonal lattice.

This family is remarkable because it does not belong to the 5-dimensional Meeks
family $\cM$ of TPMS of genus 3~\cite{meeks1990}.  Members of that family have
the eight branched values of the Gauss map forming four antipodal pairs, while
for an $\H$ surface, they are located at the north and south poles of the
2-sphere and the six vertices of a triangular prism.  The only other known TPMS
of genus 3 outside $\cM$ are the Gyroid-Lidinoid family~\cite{schoen1970,
lidin1990, fogden1993, fogden1999, weyhaupt2006, weyhaupt2008}, and the
recently discovered $\tDD$ family~\cite{chenweber1}.

We will exhibit a 2-parameter family $\oH$ of embedded TPMS of genus $3$ that
can be understood as an orthorhombic deformation family of Schwarz' $\H$
surfaces.  The closure of this family has a 1-dimensional intersection with the
$\oP$ family, a classical 2-parameter orthorhombic deformation family of
Schwarz $\P$ surface.  However, surfaces in $\oH$ are not in $\cM$.  This is,
after the $\oDD$ family in \cite{chenweber1}, another 2-parameter non-Meeks
family of TPMS of genus 3.

\medskip

\begin{figure}[hbt]
	\includegraphics[width=0.95\textwidth]{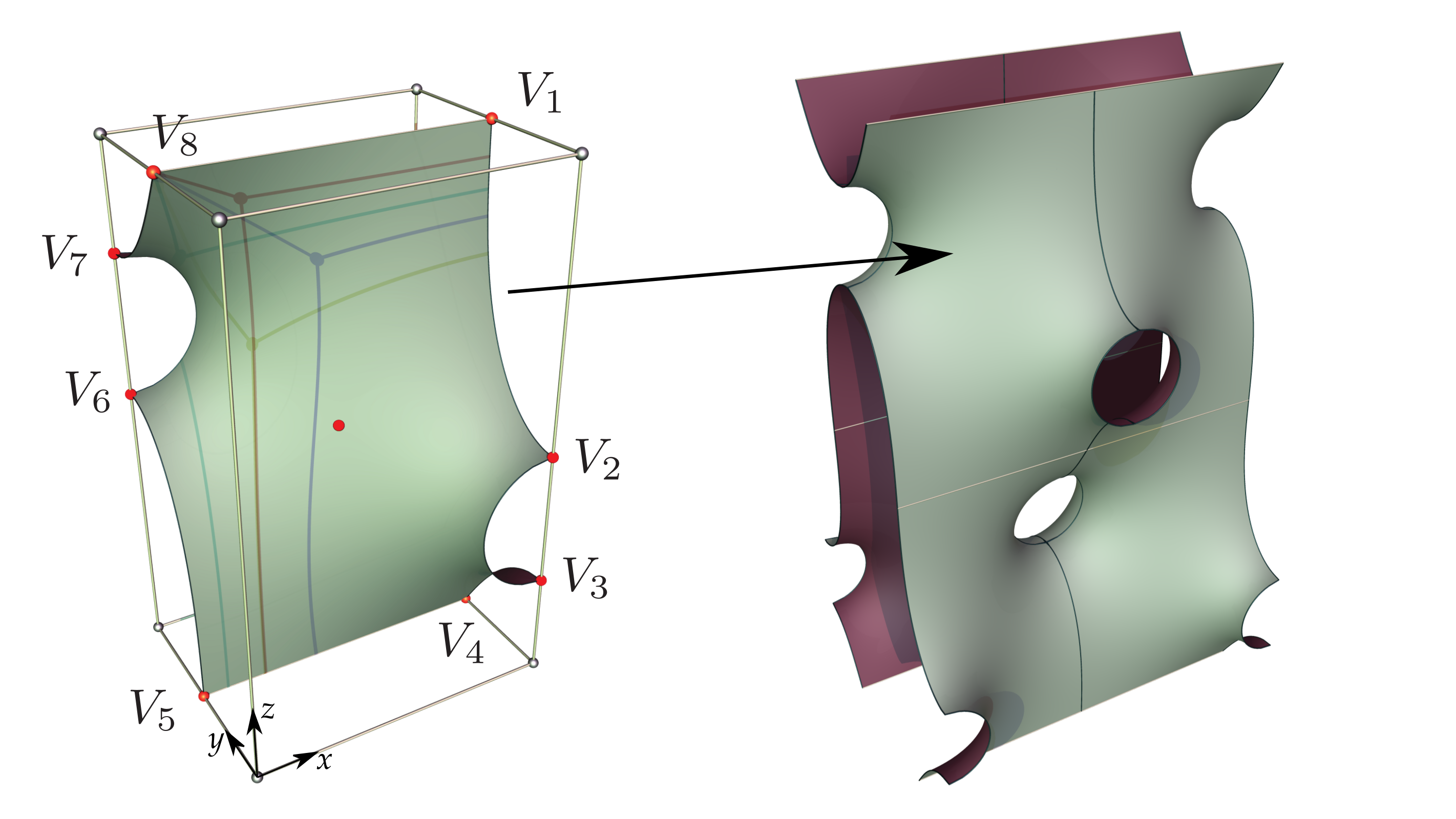}
	\caption{Fundamental Piece and Translational Fundamental Piece}
	\label{fig:fundpieces}
\end{figure}

Consider an embedded minimal surfaces $S$ inside an axes parallel box
$[-A,A]\times[-B,B]\times[-1,1]$ such that
\begin{itemize}
	\item $S$ satisfies free boundary condition on the vertical planes $x=\pm A$
		and $y=\pm B$, and fixed (Plateau) boundary condition on the horizontal
		segments $\{(x,0,\pm 1) \mid -A\le x \le A\}$.

	\item $S$ intersects the edges of the box in eight vertices, but disjoint
		from the vertical lines with $(x,y)=\pm(+A,+B)$.  Hence, apart from the
		four ends of the fixed boundaries, $S$ intersects the vertical lines
		$(x,y)=\pm(+A,-B)$ in two vertices each.

	\item $S$ is symmetric under the inversion in the origin.
\end{itemize}
Therefore $S$ is a right-angled minimal octagon, with its inversion center at
the origin.  See Figure \ref{fig:fundpieces} (left) for an example.  If the
vertices are labeled as in this figure, then the fixed boundaries are the
segments $V_8V_1$ and $V_4V_5$.

Because the two horizontal segments are in the middle of the top and bottom
faces of the box, rotations about them and reflections in the lateral faces of
the box extend $S$ to a TPMS $\tilde \Sigma$.  More specifically, $\tilde
\Sigma$ is invariant under the lattice $\Lambda$ spanned by $(2A,0,2)$,
$(-2A,0,2)$ and $(0,4B,0)$.  In the 3-torus $\R^3 / \Lambda$, $\Sigma = \tilde
\Sigma / \Lambda$ is a compact surface of genus $3$.  In
Figure~\ref{fig:fundpieces} (right) we show part of $\tilde \Sigma$ consisting
of eight copies of $S$.

% Such a surface $\tilde \Sigma$ must be embedded: As the genus is 3, the degree
% of the Gauss map is 2, the only points with vertical normal in the octagon are
% the vertices $V_2$, $V_3$, $V_6$ and $V_7$. Thus the octagon boundary is
% graphical over an embedded curve in the rectangle $[-A,A]\times [-B,B]$, and so
% is the unique Plateau solution for that boundary.

\begin{remark}
	For crystallographers, the orthorhombic lattice spanned by $(4A,0,0)$,
	$(0,4B,0)$ and $(0,0,4)$ is probably more convenient.  This is responsible
	for the letter ``o'' in our naming.  The part shown in
	Figure~\ref{fig:fundpieces} (right) is actually a translational fundamental
	domain of this orthorhombic lattice.  The quotient of $\tilde\Sigma$ by this
	lattice is a double cover of $\Sigma$, hence of genus $5$. 
\end{remark}

\begin{figure}[hbt]
	\includegraphics[width=0.48\textwidth]{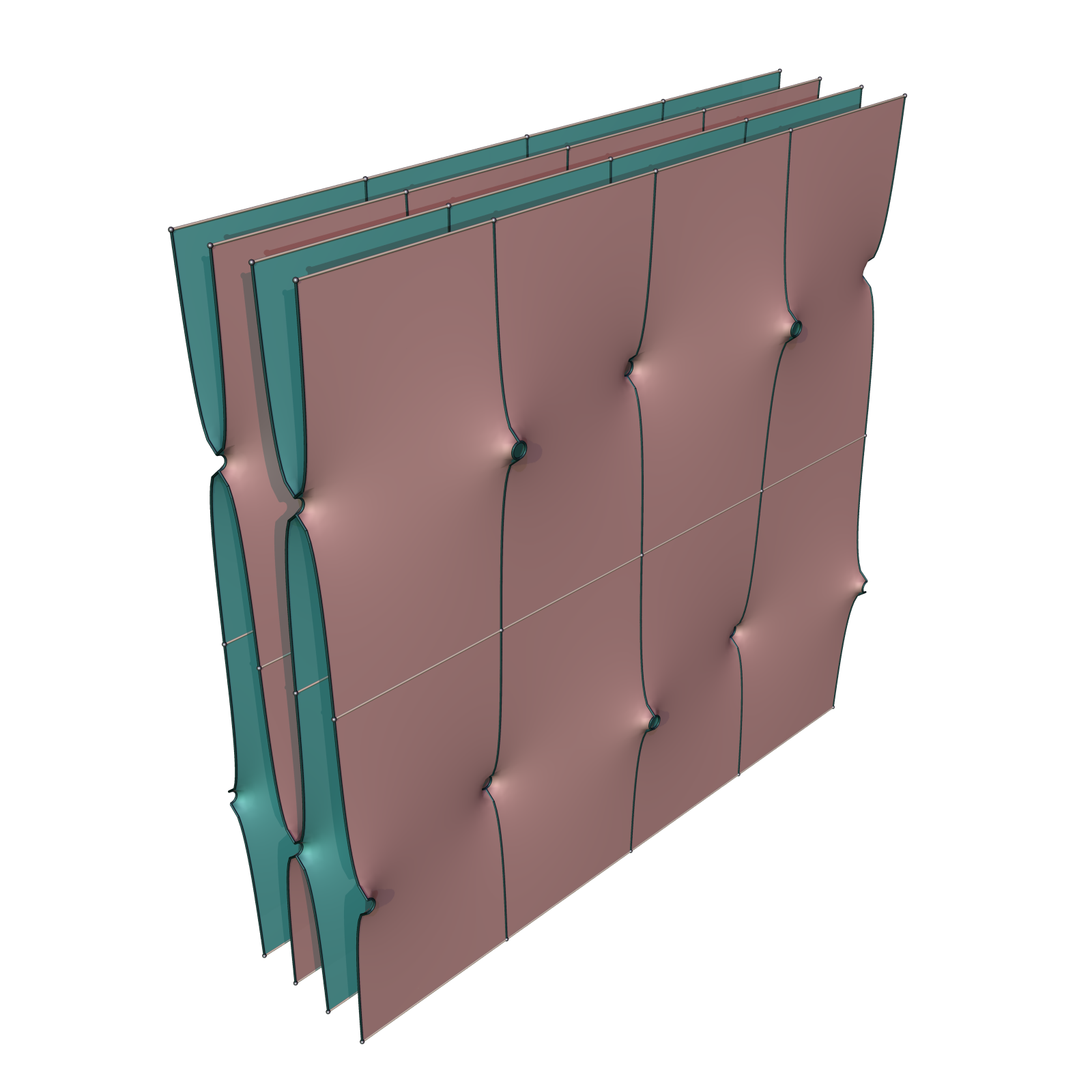}
	\includegraphics[width=0.48\textwidth]{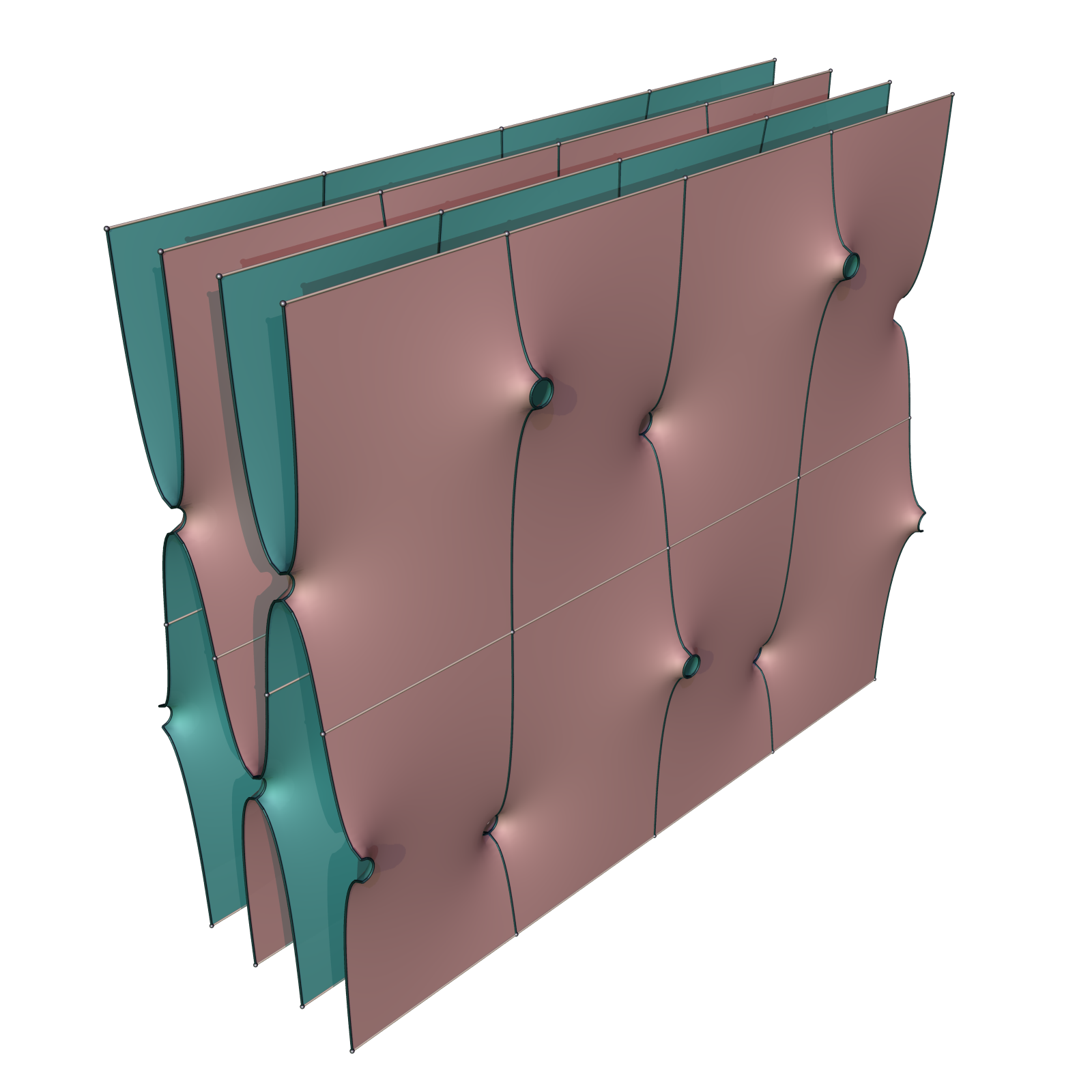}
	\caption{Surfaces in $\oP$ and $\H$ near catenoidal limits}
	\label{fig:nodes}
\end{figure}

We use $\cO$ to denote the set of all TPMS obtained in this way.  Two classical
families of surfaces in $\cO$ were already known to Schwarz~\cite{schwarz1890}.

Surfaces in the first family have an additional reflectional symmetry in the
plane $z=0$.  Then, because of the inversional symmetry in the origin, these
surfaces must also contain the $z$-axis, which serves as the axis of an order-2
rotational symmetry.  This $2$-parameter family contains Schwarz' $P$ surface
and belongs to the Meeks' family $\cM$~\cite{meeks1990}.  It is known as $\oP
b$ in the literature to distinguish from another orthorhombic deformation
family $\oP a$; see~\cite{fischer1989,fogden1992}.  In this paper, we simply
write $\oP$ in place of $\oP b$.  An example of $\oP$ with small $B$ is shown
in Figure \ref{fig:nodes} (left).

Surfaces in the second family have an additional order-3 rotational symmetry
about a line in the $y$-direction.  The rotational axis necessarily passes
through an end of fixed boundary.  This $1$-parameter family is Schwarz' $\H$
family.  An example, again with small $B$, is shown in Figure \ref{fig:nodes}
(right).

\medskip

The main purpose of this paper is to establish the existence of a new
2-parameter family described in the following theorem, and study its
properties.

\begin{theorem}\label{thm:main}
	There exists a 2-parameter continuous family $\oH$ in $\cO$ that contains
	Schwarz' $\H$ surfaces as a subfamily.  Surfaces in $\oH$ do not belong to
	the Meeks family.  That is, the branched values of the Gauss map do not form
	four antipodal pairs.  In fact, the only Meeks surfaces in $\cO$ are the
	$\oP$ surfaces.  However, the closure of $\oH$ intersects $\oP$ in a
	$1$-parameter family of TPMS.
\end{theorem}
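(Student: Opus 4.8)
The plan is to treat every member of $\cO$ through its Weierstrass representation and to convert the geometric description into a finite period problem. The Gauss map $G$ of a genus-$3$ TPMS has degree $2$ with eight simple branch points, which here are forced to be the vertical-normal vertices $V_2,V_3,V_6,V_7$ of the octagon together with their orbit under the symmetry group; consequently the underlying Riemann surface is hyperelliptic, realized as the double cover of $\mathbb{CP}^1$ branched over the eight branch values of $G$. I would first fix such a model, normalize two branch values to $0$ and $\infty$ (the vertical normals at the fixed-boundary ends), and encode the three box reflections in $x=\pm A$, $y=\pm B$ and the central inversion as (anti)holomorphic involutions. These symmetries constrain $G$ and the height differential $dh$ to a low-dimensional family parametrized by the remaining branch values, and they prescribe which one-cycles carry real, imaginary, or purely horizontal periods.

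Next I would write down the period conditions imposed by the lattice $\Lambda$ spanned by $(A,0,1)$, $(-A,0,1)$ and $(0,2B,0)$. After the symmetry reduction, the vertical period is fixed by the normalization of $dh$, and only a small number of real periods remain to be solved---essentially the requirement that the horizontal displacement close up correctly and that the two slanted generators acquire first coordinate $\pm A$. Schwarz' $\H$ surfaces provide an explicit one-parameter curve of solutions, namely the locus where the branch values degenerate to the two poles and the six vertices of a triangular prism and the extra order-$3$ rotation is present. I would then show that the period equations have maximal rank along this curve, so that the solution set is a smooth two-parameter family containing $\H$; its surfaces are automatically embedded by the graphical Plateau argument recalled above. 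This yields the family $\oH$ of the first assertion.

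For the Meeks alternative I would analyze the branch values directly. A surface of $\cO$ lies in $\cM$ precisely when its eight branch values form four antipodal pairs, and I would translate this pairing condition into the parameters of the symmetry-reduced Weierstrass data. The claim is that, beyond the symmetries common to all of $\cO$, the antipodal pairing is equivalent to invariance of the branch configuration under the equatorial reflection induced by the horizontal reflection in the plane $z=0$; equivalently, it forces exactly the extra reflective symmetry that characterizes $\oP$ within $\cO$. This would simultaneously show that the only Meeks surfaces in $\cO$ are the $\oP$ surfaces and that members of $\oH$ off this reflective locus---whose branch values retain the prism-type, non-antipodal configuration of $\H$---do not belong to $\cM$.

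Finally, to see that the closure of $\oH$ meets $\oP$ in a one-parameter family, I would let the parameters of $\oH$ approach the boundary of the solution domain and track the eight branch values, aiming to exhibit a limit in which the prism-type configuration flattens into four antipodal pairs, so that the reflective symmetry in $z=0$ is recovered and the limit lands on $\oP$ along a curve. \textbf{I expect the period problem to be the main obstacle.} Two points are delicate: proving that the reduced period differential is nondegenerate along the $\H$ curve, so that $\oH$ is genuinely two-dimensional rather than a thickening of $\H$; and controlling the period integrals in the degenerate limit tightly enough to certify both that the surface does not collapse and that the limiting branch values are exactly the antipodal configuration defining a point of $\oP$. The symmetry reduction and the embeddedness are comparatively routine; the quantitative control of periods, near $\H$ and in the limit, is where the surface-specific analysis will be concentrated.
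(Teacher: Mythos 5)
Your overall architecture (Weierstrass data, reduction to a period problem, branch-value analysis for the Meeks question) is the same as the paper's, and your claim that antipodal pairing forces the extra reflective symmetry of $\oP$ is essentially the computation the paper carries out (showing $|G(i)|=1$ and $G(t)G(-t)=-1$ force $a=b$). But the two load-bearing steps are not supplied, and the mechanisms you propose for them would not suffice. For existence you propose to verify that the period map has maximal rank along the $\H$ curve and invoke the implicit function theorem. You never establish this nondegeneracy (you yourself flag it as delicate), and even granting it, the result would only be a $2$-parameter family in a neighborhood of $\H$ --- not one whose closure reaches $\oP$, which the final assertion requires. The paper instead reduces the period problem to a single scalar equation $Q(a,b;t)=0$ after eliminating the L\'opez-Ros factor, and for each fixed $a<b$ runs an intermediate value argument in $t$: asymptotic analysis of the period integrals gives $Q<0$ as $t\to b+$ and $Q\to+\infty$ as $t\to\infty$. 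This yields global existence over the entire range $a<b$ with no transversality input (uniqueness of $t$ is only conjectured there).

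The more serious gap is the intersection $\overline\oH\cap\oP$. Your plan to let the parameters degenerate and ``track the branch values'' until they flatten into antipodal pairs cannot identify \emph{which} $\oP$ surfaces occur as limits: the period condition is satisfied identically on the locus $a=b$ (every $\oP$ surface solves it, for every $t$), so the naive limit of the period equation carries no information, and yet the theorem asserts the intersection is only a $1$-parameter curve inside the $2$-parameter family $\oP$. Selecting that curve is exactly where the paper's key idea enters: the renormalized function $\tilde Q = Q/(\beta-\alpha)$ extends real-analytically across $\alpha=\beta$, and its zero set on the diagonal is computed explicitly in complete elliptic integrals, giving equation~\eqref{eq:intersection}. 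Without some such blow-up of the trivial solution locus, your argument cannot distinguish between the intersection being empty, a curve, or all of $\oP$.
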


The intersection $\overline\oH \cap \oP$ will be explicitly described in terms
of elliptic integrals.

\medskip

We now provide the motivation that leads to the discovery of $\oH$.

By opening nodes among 2-tori, Traizet~\cite{traizet2008} constructed TPMS that
looks like horizontal planes connected by catenoidal necks.  In the degenerate
limit, the catenoidal necks become nodes whose positions have to satisfy a
balancing equation, formulated in terms of elliptic functions, and a
non-degeneracy condition.

For surfaces of genus 3, one needs to open two nodes between two tori.  In the
limit, it degenerates to a two-sheeted torus with two singular points.  Let
$(T_1, T_2) \in \C^2$ be vectors that span the limit torus, and write $T_3 =
-T_1-T_2$.  Assume that limit positions of the two nodes are $p_1$ and $p_2$,
respectively.  Up to a translation, we may assume that $p_1 = 0$.  Write $p_2 =
x T_1 + y T_2$ with $(x,y) \in [0,1]^2$.  Then $p_1, p_2$ form a balance
configuration if (see \cite[\S 4.3.3]{traizet2008})
\begin{equation}\label{eq:balanced}
	\zeta(p_2) = x \eta_1 + y \eta_2 \ ,
\end{equation}
where $\zeta$ is Weierstrass Zeta function, which is quasi-periodic in the
sense that $\zeta(z+T_i)-\zeta(z) = \eta_i = 2 \zeta(T_i/2)$, $i=1,2,3$.
Traizet proved that, if such a balanced configuration is non-degenerate, then
there exists a family of triply periodic minimal surfaces that limits in this
configuration.

\begin{remark}
	If we follow~\cite{traizet2008} more closely, we would need to consider an
	infinite sequence of tori indexed by $\Z$, and open a node between each
	adjacent pair under the periodicity assumption that $p_{k+2} = p_k + T$ for
	some $T \in \C$.  The current paper only deals with the monoclinic special
	case with $T=0$, hence the simplified formulation.
\end{remark}

Recall that $\eta_1+\eta_2+\eta_3=0$.  Hence for any $T_1$ and $T_2$, there are
three trivial balanced configurations, given by $(x,y)=(1/2,1/2)$, $(0,1/2)$
and $(1/2,0)$, respectively.  In other words, if $p_2$ is a 2-division point,
the balance equation is automatically solved for any torus.  In particular,
when $|T_1|=|T_2|$, the configuration $x=y=1/2$ is the Traizet limit of the
$\oP$ ($\oP b$) family; when $T_2/T_1$ is purely imaginary, the configuration
$x=y=1/2$ is the Traizet limit of the $\oP a$ family, and the configuration
$(x,y)=(0,1/2)$ (or $(1/2,0)$) is the Traizet limit of an orthorhombic
deformation family of CLP surfaces (termed oCLP' in \cite{fogden1992}).
Another special case is the rhombic $60^\circ$ torus.  The hexagonal symmetry
implies two non-trivial balanced configurations with $x=y=1/3$ and $x=y=2/3$,
which is the Traizet limit of the $\H$ family (non-Meeks).  Traizet limit of
these classical TPMS are illustrated in Figure~\ref{fig:classic}.

\begin{figure}[hbt]
	\includegraphics[width=0.6\textwidth]{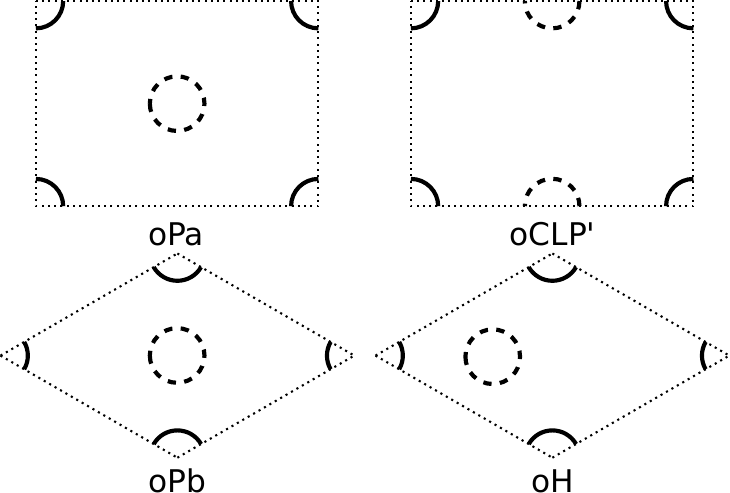}
	\caption{
		Balanced configurations that give Traizet limits of classical families of
		TPMS.  The solid circle is centered at $p_1$ and the dashed circle is
		centered at $p_2$.
	} \label{fig:classic}
\end{figure}

More generally, we consider on rhombic tori spanned by $T_{1,2}=\exp(\pm i
\theta/2)$ the balanced configurations with $x=y$.  The symmetry suggests that
these are Traizet limit of $\cO$ surfaces with $B \to 0$.  To see this, just
rotate the configuration to place $T_3$ in the $z$-direction and open nodes in
the $y$-direction.  The balance equation for such a configuration is given by 
\begin{equation}\label{eq:traizet1}
	x \zeta(T_3/2) = \zeta(x T_3)/2 
\end{equation}
Our choice of conjugate vectors $T_1, T_2$ guarantees real values on both
sides.  The solution set to this balanced equation is shown in
Figure~\ref{fig:diagsols}.  The vertical line $x=1/2$ is the trivial locus,
giving the Traizet limit of the $\oP$ surfaces.

\begin{figure}[hbt]
	\includegraphics[width=0.4\textwidth]{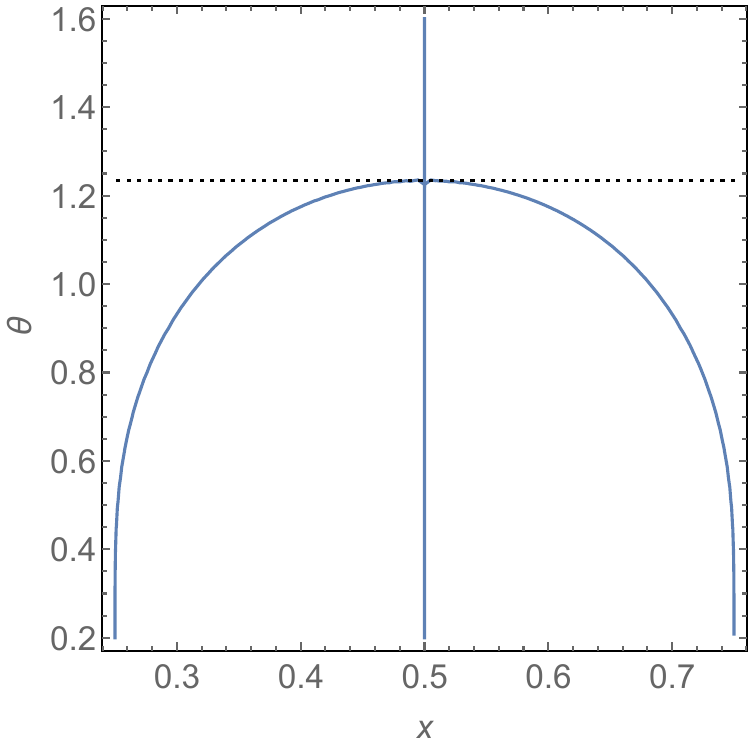}
	\caption{
		Solution set $(x,\theta)$ to the balance equation~\eqref{eq:traizet1} on the
		diagonal of rhombic tori.  The dotted horizontal line mark the position of
		$\theta^*$.
	}
	\label{fig:diagsols}
\end{figure}

But we also see a second, non-trivial locus, which is the motivation of the
current project.  This ``exotic'' locus has been noticed independently by both
authors, and probably by many other people in the minimal surface community.
We will see that balanced configurations on this locus are all non-degenerate,
so they are indeed Traizet limits.  In fact, they are the Traizet limits of the
$\oH$ family.  Our discovery of $\oH$ is actually the result of an attempt to
push TPMS away from these Traizet limits.  

\medskip

The two loci of Traizet limits intersect at $x=1/2$ and $\theta=\theta^*\approx
1.23409 \approx 70.7083^\circ$.  The balanced configuration at the intersection
is degenerate.

The torus $\T^*$ at the intersection is of particular significance.  It is the
only rhombic torus on which there exists a meromorphic 1-form with double order
pole at $0$ and double order $0$ at $T_3/2$ and only real periods.  This was
exploited for the construction of translation invariant helicoids with
handles~\cite{hoffman1999,weber2009}.  This meromorphic 1-form can be
constructed geometrically as follows (see Figure \ref{fig:slit70}): Take the
complex plane, and slit it along the interval $[-1,1]$ on the real axis.  Then
identify the top (resp.\ bottom) edge of $[-1,0]$ with the bottom (resp.\ top)
edge of $[0,1]$.  The result is a torus carrying a cone metric with two cone
points, of cone angle $6\pi$ at the point identified with $\{-1,0,1\}$, and of
cone angle $-2\pi$ at $\infty$.  The corresponding 1-form has thus a double
order pole at $0$, and a doubly order zero at $\infty$.  Its periods are
obviously real, and the symmetry of the slit ensures that the torus is rhombic.
The same torus with flat metric is nothing but $\T^*$.

We will revisit the Traizet limit in Section \ref{sec:traizet} in the framework
of our parametrization of $\oH$.  We first prove that the non-trivial locus is
non-degenerate, and unique in the sense that for every $0<\theta<\theta^*$,
\eqref{eq:traizet1} has a unique solution $0 < x < 1/2$.  Then
\eqref{eq:traizet1} will be reformulated in terms of elliptic integrals,
leading to an explicit formula for the non-trivial locus.  We will also recover
$\theta^*$, not only as the end point of the Traizet limit of $\oH$, but also
as the Traizet limit of the intersection $\overline\oH \cap \oP$.

\begin{figure}[hbt]
	\includegraphics[width=0.4\textwidth]{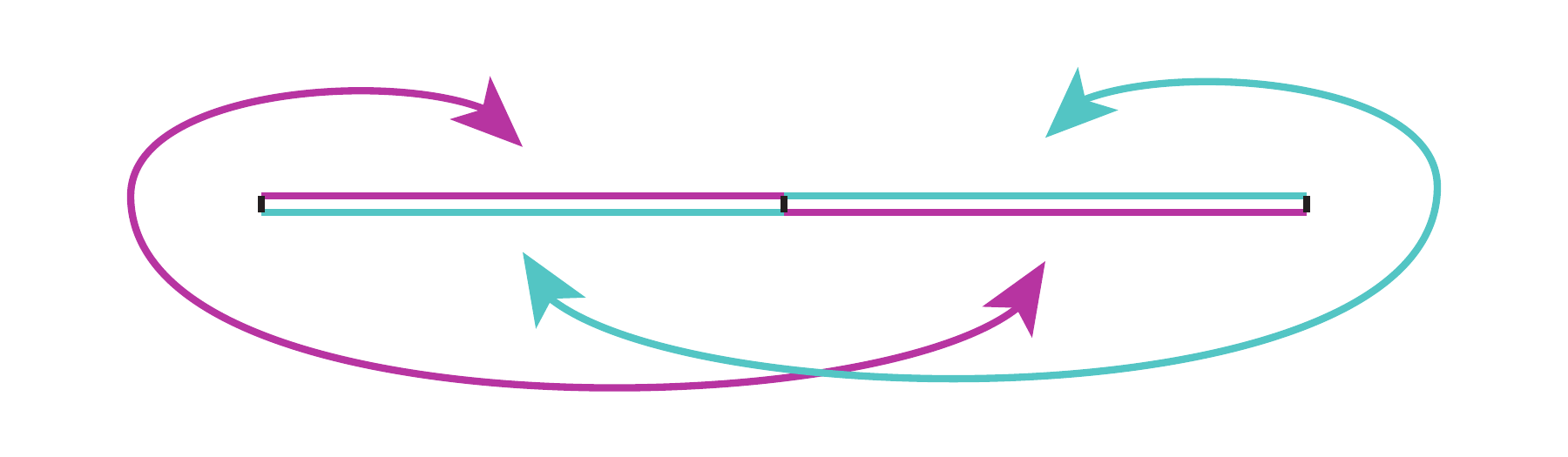}
	\caption{Model for the torus $\T^*$ }
	\label{fig:slit70}
\end{figure}

\medskip

Our paper is organized as follows: 

In Section \ref{sec:period}, we describe the Weierstrass data for surfaces in
$\cO$, prove their embeddedness, and formulate the period problem, depending on
three real positive parameters $a, b$ and $t$. The case $a=b$ corresponds to
the $\oP$ surfaces, where the period problem is automatically solved.  In the
case $a\ne b$, the period problem becomes 1-dimensional.

In Section \ref{sec:gauss} we show that, if $a \ne b$, the branched values of
the Gauss map can \emph{not} be antipodal.  This proves that $\cO \cap \cM =
\oP$, and that any solution with $a \ne b$ (namely $\oH$) lies outside $\cM$.

Section \ref{sec:exist} is dedicated to the existence proof of $\oH$.  We show
that for any choice of $a \ne b$, there is a value of $t$ that solves the
period problem.  This is accomplished through a careful asymptotic analysis of
the period integrals.  We also conjecture the uniqueness of $t$ based on
numerical experiments.

To prove that $\oP$ and the closure of $\oH$ have a non-empty intersection, we
consider in Section \ref{sec:elliptic} a modified period problem that
eliminates the trivial solutions coming from $\oP$. It turns out that the
intersection can be explicitly described in terms of elliptic integrals.

In section \ref{sec:traizet} we study the Traizet limit of $\oH$.  In
particular, the loci of~\eqref{eq:traizet1} will receive another explicit
description in terms of elliptic integrals, and the intersection of the loci
will be recovered in two different ways.  We also locate the Traizet limit of
$\H$ family on the locus.  It is then possible to find a continuous deformation
path within the space of TPMS of genus three, starting from an $\H$ surface and
ending with an $\oP$ surface, that passes through a sufficiently small
neighborhood of the Traizet limit.

\medskip

Despite different appearances, motivations and focus points, our
parametrization of $\oH$, as well as many computations, share similarities with
our previous work on $\oDD$~\cite{chenweber1}.  So we will, whenever
appropriate, refer the readers to~\cite{chenweber1} for details.  We also omit
technical details in Sections~\ref{sec:elliptic} and~\ref{sec:traizet}, where
integral tables in~\cite{byrd1971} are used for the computations involving
elliptic integrals.

\subsection*{Acknowledgements}

We are grateful to the anonymous referee for suggestions and corrections after
carefully reading a previous version of the manuscript.

\section{Weierstrass Data and the Period Problem}
\label{sec:period}

We parameterise a surface in $\cO$ using a Weierstrass representation defined
on the upper half plane such that the real axis is mapped to the boundary of
the octagon $S$.  Let the vertices of $S$ be labeled by $V_1, V_2, \cdots, V_8$
as in Figure~\ref{fig:fundpieces} (left).  Denote the preimage of $V_k$ by $v_k
\in \R$, and assume that $v_1<v_2<\ldots<v_8$. 

\begin{figure}[hbt]
	\includegraphics[width=0.8\textwidth]{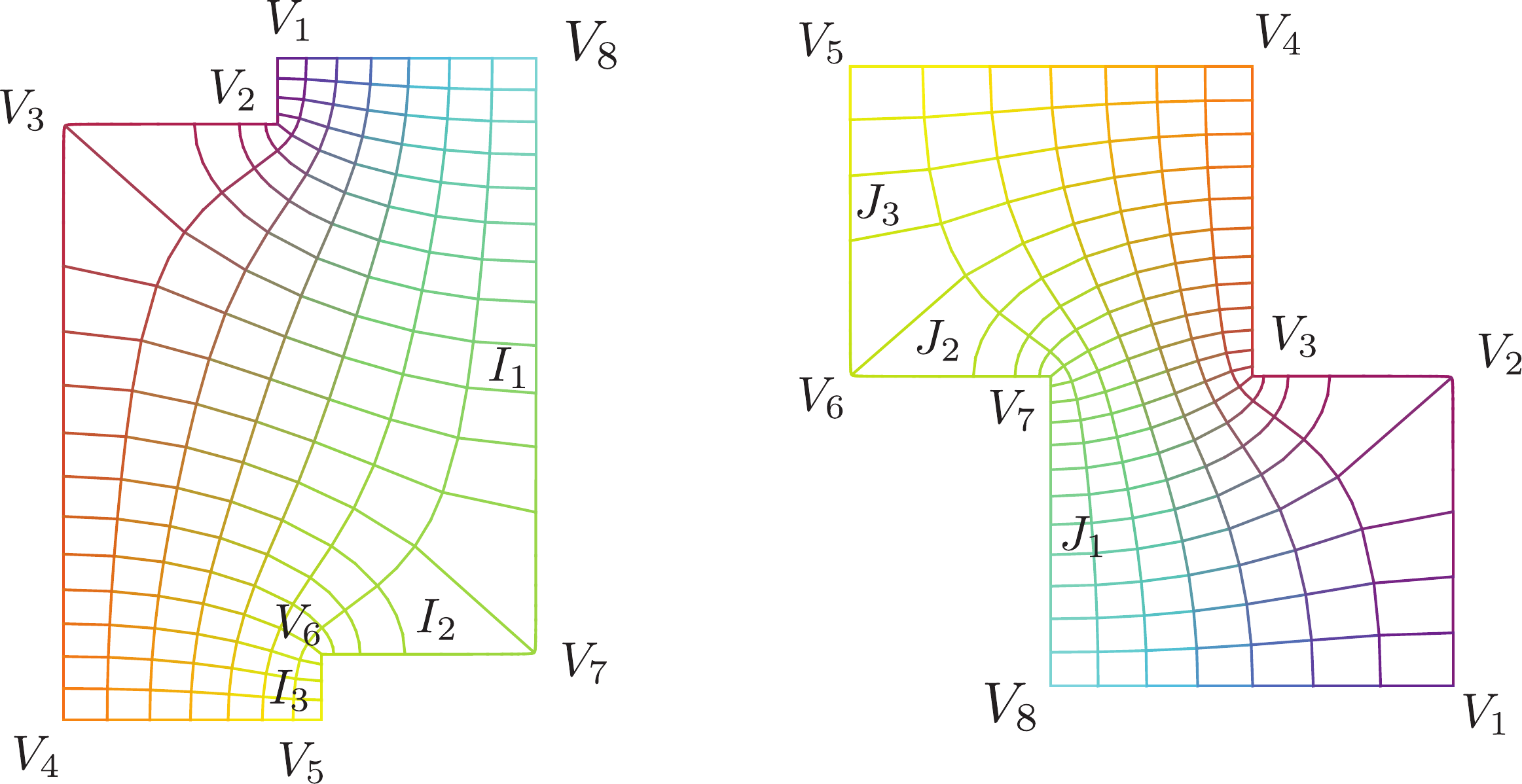}
	\caption{Images of a fundamental piece under $\Phi_1$ and $\Phi_2$.}
	\label{fig:phidomains}
\end{figure}

Given an $\cO$ surface, denote by $dh$ its height differential and by $G$ its
Gauss map. Let $\phi_1:=dh \cdot G$ and $\phi_2:=dh / G$.  The assumed boundary
symmetries of the surface imply that $\Phi_j: z \mapsto \int^z \phi_j$ ($j=1$
or $2$) map the upper half plane to ``right angled'' Euclidean octagons.  The
interior angle is $270^\circ$ at $\Phi_1(v_2)$, $\Phi_1(v_6)$ and
$\Phi_2(v_3)$, $\Phi_2(v_7)$.  Indeed, the Gauss map is vertical at $V_2$,
$V_3$, $V_6$ and $V_7$, hence these vertices are the poles and the zeros of
$G$.  Interior angles at all other vertices are $90^\circ$; see Figure
\ref{fig:phidomains}.

We may assume that the inversion is represented by the transform $\iota:z \mapsto
-1/z$, hence the inversion center of the minimal octagon at the origin is
represented by $i$ in the upper half plane.  Then we assume the eight points
$v_i$ to be $-t < -a < -1/b < -1/t < 1/t < 1/a < b < t$ for $t>1$.

Such maps are given by Schwarz-Christoffel maps.  More specifically, we have
\begin{align*} 
	\phi_1 :=  - \rho \, & \frac{ (z+a)^{+1/2} (z+1/b)^{-1/2}  (z-1/a)^{+1/2}(z-b)^{-1/2}}
	{(z+t)^{1/2} (z+1/t)^{1/2}(z-1/t)^{1/2}(z-t)^{1/2}}\, dz,\\
	\phi_2 := \frac{1}{\rho} & \frac{ (z+a)^{-1/2} (z+1/b)^{+1/2} (z-1/a)^{-1/2}(z-b)^{+1/2}}
	{(z+t)^{1/2} (z+1/t)^{1/2}(z-1/t)^{1/2}(z-t)^{1/2}}\, dz,\\
	dh := & \frac{i} {(z+t)^{1/2} (z+1/t)^{1/2}(z-1/t)^{1/2}(z-t)^{1/2}}\, dz \ .
\end{align*}
Here, the real positive Lop{\'e}z-Ros factor $\rho$ determines the scaling of the
image domains.  The Gauss map is given by
\[
	G(z) = i \rho (z-1/a)^{+1/2}(z+a)^{+1/2} (z+1/b)^{-1/2}(z-b)^{-1/2}\ .
\]

\begin{proposition}\label{prop:almost}
	Up to congruence and dilation, the image of the upper half plane under the
	map
	\begin{equation} \label{eq:weierstrass}
		F: z \mapsto \re \int^z (\omega_1, \omega_2, \omega_3) = \re \int^z \left(
		\frac{1}{2}(\phi_2-\phi_1), \frac{i}{2}(\phi_2+\phi_1), dh \right)
	\end{equation}
	is \emph{almost} the fundamental octagon of an $\cO$ surface in the following
	sense:
	\begin{itemize}
		\item The intervals $v_8v_1$ and $v_4v_5$ are mapped to straight segments
			in the $x$-direction, but not necessarily in the $y=0$ plane.

		\item The other intervals are mapped to planar symmetry curves in vertical
			planes.  More specifically:
			\begin{itemize}
				\item the interval $v_1v_2$ (resp.\ $v_5v_6$) is mapped into the plane $x=+A$ (resp.\ $-A$);

				\item the interval $v_2v_3$ (resp.\ $v_6v_7$) is mapped into the plane $y=-B$ (resp.\ $+B$);

				\item the interval $v_3v_4$ (resp.\ $v_7v_8$) is mapped into the plane $x=+A'$ (resp.\ $-A'$).
			\end{itemize}

		\item The image is symmetric under the inversion $\iota$ in the image of $i$.
	\end{itemize}
\end{proposition}

The proof is a straightforward modification of the proof in \cite{chenweber1}.

\begin{proposition}
	All minimal octagons constructed by Proposition \ref{prop:almost} are
	embedded.  In particular, the triply periodic minimal surfaces in $\cO$,
	generated by extending the octagon across symmetry lines, are embedded as
	well.
\end{proposition}

\begin{proof}
	We note that the space $\cS$ of these octagons is parametrized by the
	parameters $a$, $b$, $t$ and $\rho$, and hence connected. Moreover, $\cS$
	contains known embedded surfaces, namely the $\oP$ and $\H$ surfaces of H.A.
	Schwarz. 

	Now let $\Sigma_1$ be an arbitrary surface in $\cS$, and connect it
	continuously to an embedded surface $\Sigma_0$ in $\cS$. Let $\Sigma$ be the
	first surface in this family that is not embedded anymore. We denote by
	$\partial \Sigma =F(\R\cup\{\infty\})$ the boundary of $\Sigma$, while
	$\Sigma = F(\R\times \R^+)$ denotes the interior.  By the maximum principle,
	the immersion fails to be embedded at a boundary point of $\Sigma$.

	The boundary $\partial \Sigma$ itself is parametrized by $F$ injectively: On
	the vertical segments between $V_5$ and $V_8$ and between $V_1$ and $V_4$,
	this follows because the height differential is real with constant sign. The
	straight horizontal segments are parametrized injectively because  the
	parametrization is conformal and non-degenerate.

	Thus it remains to show that the $\partial \Sigma$ cannot meet a point of
	$\Sigma$.  This is clear for most boundary segments $V_iV_{i+1}$ by the
	convex hull property of minimal surface.  But this argument could fail when
	the arc $V_7V_8$ is not coplanar with the arc $V_5V_6$, i.e.\ when $A \ne
	A'$.  If $A'<A$, for instance, then there could be an interior point $p_0$ of
	$\Sigma$ that meets $V_7V_8$.  If this is the case, $\Sigma$ would be
	tangential to $\partial \Sigma$ at $p_0$.

	By the explicit expression of the Gauss map, no point of $\Sigma$ has a
	normal vector parallel to the $xz$-plane, hence the tangent plane cannot be
	parallel to the $y$-direction.  It is therefore possible to find a curve
	$\gamma(t)$ on $\Sigma$, starting from $p_0$, whose tangent vector is in the
	direction of $(-1,y(t),1)$.  This curve can only terminate at a boundary
	point of $\partial \Sigma$ or at an interior point with a tangent plane
	containing the $y$-direction. The latter doesn't exist, and there is no
	component of the boundary having points with smaller $x$-coordinate and
	greater $z$-coordinate than $V_7$. This contradiction shows that $\bar\Sigma
	=\Sigma\cup \partial\Sigma$ is embedded.
\end{proof}

\medskip

For such a minimal octagon to lie in $\cO$, we must have have $A=A'$ so that
the curves $V_1V_2$ and $V_3V_4$ are coplanar, hence the image
of~\eqref{eq:weierstrass} is contained in an axis parallel box centered at the
origin.  Moreover, $V_8V_1$ and $V_4V_5$ must lie in the middle of,
respectively, the top and bottom faces of the box.  We now express these
conditions in terms of the periods of $\phi_1$ and $\phi_2$.   To this end, we
introduce notations for the edge lengths of the Euclidean octagons 
\[
	I_k := \left| \int_{v_k}^{v_{k+1}} \phi_1 \right|, \qquad
	J_k := \left| \int_{v_k}^{v_{k+1}} \phi_2 \right|
\]
for $1 \le k \le 7$.  These are positive real numbers that depend analytically
on the parameters $a, b, t$ and $\rho$. Note that by the inversional symmetry,
we have
\begin{equation}\label{eq:symmetry}
	I_k=I_{k+4} \quad\text{and}\quad J_k=J_{k+4}
\end{equation}
for $1 \le k \le 3$.

\begin{proposition}
	The image of the upper half plane under the Weierstrass
	representation~\eqref{eq:weierstrass} is the fundamental octagon of a surface
	in $\cO$ if and only if the following period conditions are satisfied:
	\begin{equation}
		\begin{aligned} \label{eq:periodcond}
			I_1+I_3 &= J_1+J_3 \ , \\
			I_2&= J_2 \ .
		\end{aligned}
	\end{equation}
\end{proposition}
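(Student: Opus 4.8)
The plan is to translate the two geometric defects of the ``almost fundamental octagon'' produced by the previous proposition into equations among the edge lengths $I_k, J_k$. Those defects are precisely that the planes containing $v_1v_2$ and $v_3v_4$ need not coincide (so possibly $A\ne A'$), and that the two horizontal boundary segments $v_8v_1$ and $v_4v_5$ need not sit in the midplane $y=0$ of the box. Everything else required of an $\cO$ surface---each curve lying in its prescribed vertical plane, the inversional symmetry, the agreement of the two $y=\pm B$ planes and of the two horizontal heights $z=\pm c$---is already furnished by the previous proposition together with the symmetry \eqref{eq:symmetry}. So exactly these two scalar conditions remain, and it suffices to show that each is equivalent to one of the equations in \eqref{eq:periodcond}.

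First I would record, edge by edge, whether $\phi_1$ and $\phi_2$ are real or purely imaginary along the real axis. Since $\omega_1=\tfrac12(\phi_2-\phi_1)$ and $\omega_2=\tfrac{i}{2}(\phi_2+\phi_1)$, a boundary curve lies in a plane $x=\text{const}$ exactly when $\phi_1,\phi_2$ are both imaginary there, and in a plane $y=\text{const}$ exactly when both are real. Reading off the plane assignments then shows that $\phi_1,\phi_2$ are real along $v_2v_3$ and $v_6v_7$, and purely imaginary along $v_1v_2, v_3v_4, v_5v_6, v_7v_8$. Consequently $\int_{v_k}^{v_{k+1}}\phi_1$ and $\int_{v_k}^{v_{k+1}}\phi_2$ equal $\pm I_k$ and $\pm J_k$ on the first type of edge, and $\pm i\,I_k$ and $\pm i\,J_k$ on the second.

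For the coplanarity condition, the signed difference $A'-A$ is just the change in the $x$-coordinate as one traverses $v_2v_3$, i.e.\ $\re\int_{v_2}^{v_3}\tfrac12(\phi_2-\phi_1)$. As both forms are real there, this evaluates to $\tfrac12(\pm J_2\mp I_2)$, so $A=A'$ is equivalent to $I_2=J_2$. For the midplane condition, I would integrate $\omega_2$ along the three edges $v_5v_6$, $v_6v_7$, $v_7v_8$ joining the bottom segment to the top segment: the middle edge contributes nothing since $y$ is constant on $v_6v_7$, while the two $x=\text{const}$ edges contribute $\re\int\tfrac{i}{2}(\phi_2+\phi_1)$, evaluating to $\pm\tfrac12(I_5+J_5)$ and $\pm\tfrac12(I_7+J_7)$. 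By the inversional symmetry this sum equals twice the common $y$-offset of the two horizontal segments; substituting $I_5=I_1$, $J_5=J_1$, $I_7=I_3$, $J_7=J_3$ from \eqref{eq:symmetry} and setting it to zero yields $I_1+I_3=J_1+J_3$.

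The main obstacle is the sign bookkeeping. A priori each evaluation above carries an undetermined $\pm$, and the conclusions are the stated equations---rather than vacuous or unsolvable sign-flipped versions such as $I_2=-J_2$---only because the edge orientations in the two right-angled octagons force the $I$-contributions and the $J$-contributions to combine with the correct relative sign. Pinning these down requires following the successive turning directions around the boundaries of the $\Phi_1$- and $\Phi_2$-octagons, using the prescribed $90^\circ/270^\circ$ interior-angle pattern to fix the direction of each edge. This is the one place where the concrete layout of the octagons in Figure~\ref{fig:phidomains}, and not merely the reality type of $\phi_1,\phi_2$, must be invoked.
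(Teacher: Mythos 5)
Your overall strategy is the same as the paper's: reduce to the two residual conditions $A=A'$ and ``the horizontal segments lie in the midplane,'' rewrite each as a real period of $\omega_1$ resp.\ $\omega_2$ along boundary edges, evaluate using the reality type of $\phi_1,\phi_2$ there, and close up with the inversional symmetry \eqref{eq:symmetry}. Your treatment of the coplanarity condition matches the paper's and is correct. For the midplane condition the paper compares $\re\int_{v_1}^{v_2}\omega_2$ with $\re\int_{v_7}^{v_8}\omega_2$ while you integrate $\omega_2$ from $v_5$ to $v_8$ and invoke the inversion at the end; these are equivalent bookkeepings.

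However, the one computation you defer is exactly where your written formula goes wrong. On an $x=\mathrm{const}$ edge the contributions of $\phi_1$ and $\phi_2$ to $\re\int\tfrac{i}{2}(\phi_1+\phi_2)$ carry \emph{opposite} relative signs: the edge contributes $\pm\tfrac12(I_k-J_k)$, not $\pm\tfrac12(I_k+J_k)$. The reason is not a free choice of orientation: on such an edge $dh$ is real while $\phi_1,\phi_2$ are purely imaginary, and $\phi_1\phi_2=dh^2>0$ forces $\phi_1$ and $\phi_2$ to have imaginary parts of opposite sign (the paper records this explicitly: on $v_1v_2$, $\phi_1$ is negative imaginary and $\phi_2$ positive imaginary, and the reverse on $v_7v_8$). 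This relative sign is not one of the ``undetermined $\pm$'' that orientation-chasing can later fix, because your parenthesization $(I_5+J_5)$ has already committed both terms to the same sign. With your formula, setting the total $y$-displacement to zero would yield a relation among $I_1+J_1$ and $I_3+J_3$, from which $I_1+I_3=J_1+J_3$ does not follow. With the corrected contributions $\tfrac12(J_1-I_1)$ and $\tfrac12(J_3-I_3)$ your argument goes through verbatim and reproduces the paper's conclusion.
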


\begin{proof}
	The curves $V_1V_2$ and $V_3V_4$ are coplanar if and only if
	\[
		\re \int_{v_2}^{v_3} \omega_1 = 0 \ .
	\]
	This is equivalent to
	\[
		\re \int_{v_2}^{v_3} (\phi_1 - \phi_2) = 0.
	\]
	Observe that on $v_2v_3$, the integrands of $\phi_1$ and $\phi_2$ are both
	negative real.  So the equation above can be written as $I_2 = J_2$, which is
	the second period condition.

	The top segment $V_8V_1$ lies in the middle of the top face if and only if
	\[
		\re \int_{v_1}^{v_2} \omega_2 = \re \int_{v_7}^{v_8} \omega_2 \ .
	\]
	This is equivalent to 
	\[
		\im \int_{v_1}^{v_2} (\phi_1+\phi_2) = \im \int_{v_7}^{v_8} (\phi_1+\phi_2) \ .
	\]
	Observe on $v_1v_2$ that the integrand in $\phi_1$ (resp.\ $\phi_2$) is
	negative (resp.\ positive) imaginary, and on $v_7v_8$ that the integrand in
	$\phi_1$ (resp.\ $\phi_2$) is positive (resp.\ negative) imaginary. So the
	equation above can be written as
	\[
		I_1 - J_1 = J_7 -I_7 = J_3 - I_3,
	\]
	where the second equation follows from the symmetry~\eqref{eq:symmetry}.
	This proves the first period condition.

	If the period conditions are satisfied, then by the inversional symmetry $\iota$, the
	curves $V_5V_6$ and $V_7V_8$ must also be coplanar, and the segment $V_4V_5$
	must also lie in the middle of the bottom box. 
\end{proof}

We can eliminate $\rho$ by taking the quotient of the two equations, therefore:

\begin{corollary} \label{cor:periodcondition}
	If 
	\[
		Q_I := \frac{I_1+I_3}{I_2} = \frac{J_1+J_3}{J_2} =: Q_J
	\]
	or, equivalently, if
	\begin{equation}\label{eq:quotient}
		Q := Q_I - Q_J = \frac{I_1+I_3}{I_2} - \frac{J_1+J_3}{J_2} = 0
	\end{equation}
	for some choice of $a,b,t$, then $\rho \in \R_{>0}$ can be uniquely adjusted
	so that the period conditions~\eqref{eq:periodcond} are satisfied.
\end{corollary}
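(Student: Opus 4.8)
The plan is to exploit the simple dependence of the edge lengths on the Löpez--Ros factor $\rho$. Inspecting the Weierstrass data, the integrand of $\phi_1$ carries an overall factor $\rho$, while that of $\phi_2$ carries a factor $1/\rho$, and $dh$ is independent of $\rho$. Hence, writing $\hat I_k$ and $\hat J_k$ for the edge lengths computed with $\rho = 1$, we have $I_k = \rho\,\hat I_k$ and $J_k = \rho^{-1}\hat J_k$ for every $k$, where $\hat I_k, \hat J_k > 0$. In particular the quotients $Q_I$ and $Q_J$ do not depend on $\rho$, which is precisely why dividing the two equations in \eqref{eq:periodcond} eliminates $\rho$ and collapses the two conditions into the single equation $Q = 0$.

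First I would rewrite each period condition explicitly as an equation for $\rho$. The condition $I_2 = J_2$ becomes $\rho\,\hat I_2 = \rho^{-1}\hat J_2$, that is, $\rho^2 = \hat J_2 / \hat I_2$; the condition $I_1 + I_3 = J_1 + J_3$ becomes $\rho(\hat I_1 + \hat I_3) = \rho^{-1}(\hat J_1 + \hat J_3)$, that is, $\rho^2 = (\hat J_1 + \hat J_3)/(\hat I_1 + \hat I_3)$. Because all of the $\hat I_k$ and $\hat J_k$ are strictly positive, each of these equations determines a \emph{unique} positive value of $\rho$, namely the positive square root of the corresponding right-hand side.

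The two values of $\rho^2$ obtained in this way coincide exactly when $(\hat I_1 + \hat I_3)\hat J_2 = \hat I_2 (\hat J_1 + \hat J_3)$, which after rearranging is precisely the identity $Q_I = Q_J$, i.e. $Q = 0$. Thus, assuming $Q = 0$ at a given choice of $a, b, t$, the common value $\rho := \sqrt{\hat J_2/\hat I_2} = \sqrt{(\hat J_1 + \hat J_3)/(\hat I_1 + \hat I_3)} \in \R_{>0}$ satisfies both equations of \eqref{eq:periodcond} simultaneously, and it is the only positive $\rho$ that does so. Conversely, if some positive $\rho$ satisfies \eqref{eq:periodcond}, then the two expressions for $\rho^2$ must agree, forcing $Q = 0$.

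Since the argument is a direct algebraic manipulation, I do not expect a serious obstacle here. The only points requiring care are confirming the scaling $I_k = \rho\,\hat I_k$ and $J_k = \rho^{-1}\hat J_k$ directly from the displayed Weierstrass data, and noting that the strict positivity of the edge lengths is exactly what guarantees both the existence and the uniqueness of the positive square root $\rho$.
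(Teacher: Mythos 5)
Your proof is correct and matches the paper's (implicit) argument: the paper derives the corollary in one line by observing that taking the quotient of the two period conditions eliminates $\rho$, which is exactly the scaling $I_k = \rho\,\hat I_k$, $J_k = \rho^{-1}\hat J_k$ that you verify and exploit. Your write-up simply makes explicit the existence and uniqueness of the positive square root, which the paper leaves to the reader.
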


Thus we have expressed the period condition as a single equation $Q=0$, where
$Q$ depends on three parameters $a,b,t$.  

Note that when $a=b$, we have additional symmetries:

The involution $\sigma_1: z \mapsto
-\overline{z}$ transforms the Weierstrass data by
\[
	\sigma_1^*dh = \overline{dh(z)} \quad \text{and} \quad
	G(\sigma_1(z)) \overline{G(z)} = \rho^2.
\]
Consequently, we have $I_k = \rho^2 J_{4-k}$ for $1 \le k \le 3$, so
the period conditions~\eqref{eq:periodcond} are satisfied automatically with
$\rho=1$.  In this case, it can be explicitly verified that the positive
imaginary axis is mapped by the Weierstrass
representation~\eqref{eq:weierstrass} to the vertical straight segment between
the middle points of $V_4V_5$ and $V_8V_1$, and $\sigma_1$
induces an order-2 orientation-reversing rotation of the surface
around this segment.  

On the other hand, the involution $\sigma_2: z \mapsto 1/{\bar z}$ induces an
order-2 reflection in the $z=0$ plane.  This can be seen by observing that
$\sigma_1\circ \sigma_2=\iota$.

\medskip

To simplify our computations in the following sections, we employ the
substitution $\zeta=z-1/z$, which is monotone on the positive real axis. We
also replace $a-1/a$ by $\alpha$, $b-1/b$ by $\beta$, and $t-1/t$ by $\tau$ so
that $-\tau < -\alpha < \beta < \tau$. Then the 1-forms $\phi_1$ and $\phi_2$
become
\begin{align*} 
	\phi_1 = {}& -\rho (\zeta + \alpha)^{+1/2} (\zeta - \beta)^{-1/2} (\zeta^2 - \tau^2)^{-1/2} (\zeta^2 + 4)^{-1/2}\, d\zeta \ ,\\
	\phi_2 = {}& \frac{1}{\rho} (\zeta +\alpha)^{-1/2} (\zeta -\beta)^{+1/2} (\zeta^2 - \tau^2)^{-1/2} (\zeta^2 + 4)^{-1/2}\, d\zeta \ ,
\end{align*}
and the Gauss map is simplified to
\begin{equation}\label{eq:altGauss}
	G(\zeta)=\rho i (\zeta + \alpha)^{+1/2} (\zeta-\beta)^{-1/2}.
\end{equation}

In the rest of the paper, the original parametrization is understood whenever
Latin letters $a,b,t,z$ are used, and the simplified parametrization is
understood whenever Greek letters $\alpha,\beta,\tau,\zeta$ are used.  This
should not cause any confusion.

\section{Branched Values of the Gauss Map}
\label{sec:gauss}

In this section, we will show that the branched values of the Gauss map are
never antipodal with $a \ne b$.  As a consequence, the only surfaces in $\cO$
that belong to the Meeks family $\cM$ are the surfaces in $\oP$. The arguments
don't require the period condition to be satisfied and are purely algebraic.

\begin{theorem}
	The branched values of the Gauss map of a surface in $\cO$ are antipodal if
	and only if $a=b$.
\end{theorem}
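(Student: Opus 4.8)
The plan is to locate the ramification points of the Gauss map $G$ on the genus-three Riemann surface, compute the eight branched values explicitly, and then test the resulting set against the antipodal involution $u\mapsto -1/\bar u$ of $\hat{\C}$; since no point can be its own antipode, the branched values form four antipodal pairs precisely when this set is invariant under the involution. Working in the simplified coordinate with $G(\zeta)^2=-\rho^2(\zeta+\alpha)/(\zeta-\beta)$, I would first record that $G$ is invariant under the inversion $z\mapsto -1/z$ — a direct substitution shows $G(-1/z)=G(z)$ — so that $G^2$ really is a function of $\zeta$ alone. Differentiating gives $2G\,dG=\rho^2(\alpha+\beta)(\zeta-\beta)^{-2}\,d\zeta$, and together with the hyperelliptic structure this locates the eight ramification points of $G$: the two sheets over the inversion center $z=\pm i$ (that is, $\zeta=\pm 2i$), and the four hyperelliptic branch points over $z=\pm t,\pm 1/t$ (that is, $\zeta=\pm\tau$). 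The delicate point is that the vertical points $v_2,v_3,v_6,v_7$, at which $G$ has a simple zero or pole, are \emph{not} ramification points of $G$: near such a point $z+a=s^2$ forces $G\sim\text{const}\cdot s$, so $G$ is unramified there, and the ramification sits instead at the remaining branch points of $z$, where $G\neq 0,\infty$.

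Evaluating $G$ at these eight points splits the branched values into two groups. Over $\zeta=\pm 2i$ one obtains $\pm\mu,\pm\bar\mu$ with $\mu^2=-\rho^2(2i+\alpha)/(2i-\beta)$; since $\im(\mu^2)=2\rho^2(\alpha+\beta)/(\beta^2+4)$ and the ordering $-\alpha<\beta$ forces $\alpha+\beta>0$, these four values lie off both the real and the imaginary axis. Over $\zeta=\pm\tau$ one obtains two values $g_+,g_-$, each attained at two ramification points paired by the inversion, with $g_\pm^2=-\rho^2(\pm\tau+\alpha)/(\pm\tau-\beta)$; the ordering makes both ratios positive, so $g_+,g_-$ are purely imaginary. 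Because the antipodal map preserves the imaginary axis and sends off-axis points to off-axis points, invariance of the full set forces the off-axis quadruple $\{\pm\mu,\pm\bar\mu\}$ and the imaginary pair $\{g_+,g_-\}$ to be antipodally closed \emph{separately}.

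Each closure condition is then immediate. The quadruple $\{\pm\mu,\pm\bar\mu\}$ is antipodally closed if and only if $|\mu|=1$, i.e.\ $\rho^4=(\beta^2+4)/(\alpha^2+4)$; and closure of $\{g_+,g_-\}$ forces $|g_+|\,|g_-|=1$, i.e.\ $\rho^4=(\tau^2-\beta^2)/(\tau^2-\alpha^2)$. Equating the two expressions for $\rho^4$ eliminates the scale and yields $(\beta^2-\alpha^2)(\tau^2+4)=0$; since $\alpha+\beta>0$ this gives $\beta=\alpha$, and as $x\mapsto x-1/x$ is increasing on $x>0$ we conclude $a=b$. For the converse, $a=b$ makes both conditions collapse to $\rho^4=1$, satisfied by $\rho=1$, and the surface then carries the extra reflection that places it in $\oP\subset\cM$, so its branched values are indeed antipodal. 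I expect the ramification bookkeeping of the first paragraph to be the main obstacle — in particular confirming the eight ramification points with their multiplicities, and checking that the imaginary pair genuinely carries opposite signs (so that $|g_+|\,|g_-|=1$, rather than a sign-obstructed variant, is the effective closure condition). Note, however, that the forward direction uses only the two magnitude equalities and is therefore unaffected by the sign issue, while the opposite-sign property needed for the converse is supplied by the additional symmetry of $\oP$.
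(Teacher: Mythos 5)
Your proof is correct and essentially reproduces the paper's argument: the same eight branched values, the same split into an off-axis quadruple $\{\pm\mu,\pm\bar\mu\}$ and an imaginary quadruple, and the same two conditions $\rho^2\sqrt{(\alpha^2+4)/(\beta^2+4)}=1$ and $\rho^2\sqrt{(\tau^2-\alpha^2)/(\tau^2-\beta^2)}=1$, equated to eliminate $\rho$ and force $\alpha=\pm\beta$; the only real difference is that the paper locates the eight ramification points by citing Meeks' theorem that they are precisely the inversion centers (the octagon center and the endpoints of the fixed boundary segments), rather than by your direct computation with $dG$ and the hyperelliptic structure. The sign issue you flag resolves exactly as in the paper: extending the octagon by the boundary reflections contributes the branched values $-G(t)$ and $-G(-t)$ in addition to $G(t)$ and $G(-t)$, so the imaginary group is genuinely $\{\pm iy_+,\pm iy_-\}$ and $y_+y_-=1$ (equivalently $G(t)G(-t)=-1$) is the correct closure condition, as you anticipated.
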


\begin{proof}
	We begin by locating the branched points of the Gauss map in the fundamental
	octagon. By a result of Meeks \cite{meeks1990}, the branched points of a TPMS
	of genus 3 are precisely the inversion centers of the surface. They are
	situated, in the fundamental octagon, at the center of the octagon and at the
	end points of the fixed boundary segments.

	The octagon center corresponds to $i$ in the upper half plane, so that $G(i)$
	is a branched value. Three more branched points and values are obtained after
	extending the octagon by reflections.  We then have four branched values,
	namely $\pm G(i)$ and $\pm \overline{G(i)}$. Their stereographic images on
	the 2-sphere lie at the vertices of a horizontal rectangle, symmetric in the
	planes $x=0$ and $y=0$. 

	The end points of the fixed boundary segments correspond to $\pm t$ and $\pm
	1/t$ in the parameter domain.  Because of the inversional symmetry, they
	provide only two branched values $G(t)$ and $G(-t)$.  These both lie on the
	positive imaginary axis, and their stereographic images on the 2-sphere lie
	on the upper half-circle with $y>0$ and $x=0$.  Extending the octagon by
	reflections gives two more branched values at $-G(t)$ and $-G(-t)$, whose
	stereographic images lie on the lower half-circle.  The stereographic images
	of these four branched values then form a quadrilateral in the plane $x=0$
	symmetric to the plane $y=0$.

	We show an example for the location of the eight branched values in Figure
	\ref{fig:gaussbranched}.

	\begin{figure}[hbt]
		\includegraphics[width=0.4\textwidth]{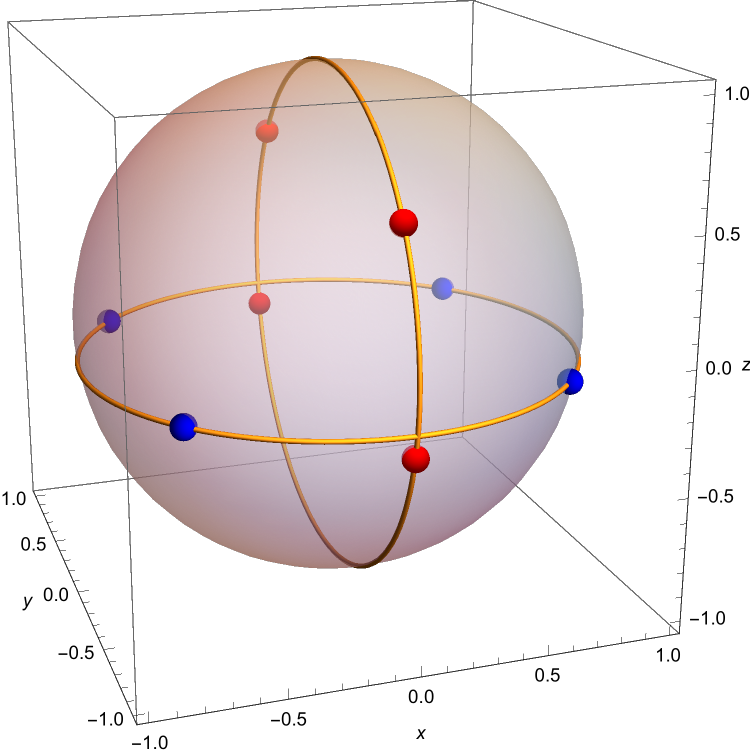}
		\caption{Branched values of the Gauss map}
		\label{fig:gaussbranched}
	\end{figure}

	In order that these eight branched values occur in antipodal pairs, the first
	quadrilateral must lie in the plane $z=0$, while the second quadrilateral
	must be a rectangle.  These conditions mean, in terms of the Gauss map, that
	$|G(i)|=1$ and $G(t)G(-t)=-1$.  We then compute from~\eqref{eq:altGauss} that
	\[
		\rho^2 \sqrt\frac{\alpha^2+4}{\beta^2+4} = 1\quad\text{and}\quad
		\rho^2 \sqrt\frac{\tau^2-\alpha^2}{\tau^2-\beta^2} = 1,
	\]
	which forces $\alpha = \pm\beta$, hence $a=b$ under the constraint $1/t < 1/a
	< b < t$.
\end{proof}

The reader might be curious about the parameter values for the Schwarz $\H$
surfaces within this representation.  These are difficult to determine
explicitly.  But we know that, among the eight branched values of the Gauss map
of an $\H$ surface, there is one and only one antipodal pair.  This implies
either $G(t)=G(-1/t)=i$ or $G(-t)=G(1/t)=i$.  We then obtain
from~\eqref{eq:altGauss} a necessary condition of the parameters for $\H$,
namely
\[
	\rho^2 \frac{\tau + \alpha}{\tau - \beta} = 1 \quad\text{or}\quad \rho^2 \frac{\tau - \alpha}{\tau + \beta} = 1.
\]
In view of Conjecture~\ref{conj:unique} in the next section, we believe that
this condition is also sufficient.

We see from~\eqref{eq:altGauss} that, for any reals $\alpha,\beta,\rho$ with
$-\alpha < \beta$ and $\rho > 0$, there is a unique $\zeta^* \in
(\infty, -\alpha) \cup (\beta, \infty) \cup \infty$ such that $G(\zeta^*) = i$.
In other words, there must be a point on the boundary of the octagon, namely
the image of $\zeta^*$ under~\eqref{eq:weierstrass}, where the normal vector
points in the $y$ direction.  For the $\oP$ family, $\rho = 1$ and $\alpha =
\beta$, hence $\zeta^* = \infty$.  For the $\H$ family, our calculation above
shows that $\zeta^* = \pm \tau$.  So $\cO$ is divided in two parts, depending
on the image of $\zeta^*$ being on the fixed boundary (as $\oP$) or on the free
boundary of the octagon; Schwarz $\H$ family lies on the interface.

\begin{remark}
	Using the order-$3$ rotational symmetry of the $\H$ surfaces, a computer
	algebra system gives the explicit expressions
	\begin{align*}
		\alpha &= \tau \frac{4(\tau^2+4)\sqrt{\tau^4-56\tau^2+16} - (3\tau^4-40\tau^2+48)}{7\tau^4+88\tau^2-16} \\
		\beta &= \tau \frac{4(\tau^2+4)\sqrt{\tau^4-56\tau^2+16} + (3\tau^4-40\tau^2+48)}{7\tau^4+88\tau^2-16}
	\end{align*}
	for the parameters $\alpha$ and $\beta$ in terms of $\tau$.  Then the period
	problem seems automatically solved, at least numerically.
\end{remark}

\section{Existence of Non-Trivial Solutions}
\label{sec:exist}

Recall that $0 < 1/t < 1/a < b < t$, and the periodic
condition~\eqref{eq:quotient} as we copy below
\[
	Q(a,b;t) = \frac{I_1+I_3}{I_2} - \frac{J_1+J_3}{J_2} = 0.
\]
The quantity $Q$ is our focus in the remaining sections of this paper.  From
now on, we will ignore the Lop\'ez-Ros factor $\rho$ in our calculations, since
$Q$ is independent of this factor.

We now prove the main theorem of this paper.
\begin{theorem} \label{thm:exist}
	If $a = b$, the period condition~\eqref{eq:quotient} is solved for any choice
	of $t$.

	If $a < b$, then there exists a value of $t$ that solves the period
	condition~\eqref{eq:quotient}.
\end{theorem}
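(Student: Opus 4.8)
The plan is to settle the first assertion by quoting what is already established — when $a=b$ (that is, $\alpha=\beta$) the period conditions hold automatically, so $Q\equiv 0$ for every $t$ — and to spend the work on the case $\alpha<\beta$. Here I would fix $\alpha<\beta$ and regard $Q$ as a function of $\tau$ alone on the admissible range $\tau\in(\beta,\infty)$ (equivalently $t>b$). In the simplified coordinates the three edge integrals sit on consecutive intervals,
\[
  I_1=\Bigl|\int_{-\tau}^{-\alpha}\varphi_1\Bigr|,\qquad I_2=\Bigl|\int_{-\alpha}^{\beta}\varphi_1\Bigr|,\qquad I_3=\Bigl|\int_{\beta}^{\tau}\varphi_1\Bigr|,
\]
and likewise $J_1,J_2,J_3$ with $\varphi_2$; all six are positive and analytic for $\beta<\tau<\infty$, so $Q$ is continuous there. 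It then suffices to exhibit opposite signs of $Q$ at the two ends of the interval and invoke the intermediate value theorem. I would read off the two signs from the endpoint asymptotics of the integrals.

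As $\tau\to\beta^+$ the branch points $\zeta=\beta$ and $\zeta=\tau$ collide. In $\varphi_1$ both carry exponent $-\tfrac12$, so from the left interval two square-root singularities merge into a simple pole: the integral $I_2$, which approaches $\beta$ through $(-\alpha,\beta)$, diverges logarithmically, while $I_1$ stays finite and $I_3$ is a convergent Beta-type integral $\int_\beta^\tau(\zeta-\beta)^{-1/2}(\tau-\zeta)^{-1/2}\,d\zeta$ times a bounded prefactor, hence also finite. In $\varphi_2$ the factor $(\zeta-\beta)^{+1/2}$ removes the divergence, so $J_1,J_2$ remain finite while $J_3\to0$ as the interval $(\beta,\tau)$ shrinks. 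Consequently $\tfrac{I_1+I_3}{I_2}\to0$ and $\tfrac{J_1+J_3}{J_2}\to\tfrac{J_1}{J_2}>0$, giving $\lim_{\tau\to\beta^+}Q<0$.

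The opposite sign at $\tau\to\infty$ is the delicate step, and I expect it to be the main obstacle. Here $I_2\sim C_2^I/\tau$ and $J_2\sim C_2^J/\tau$ with fixed constants, whereas $I_1,I_3,J_1,J_3$ each acquire a logarithmic factor from the tail region $1\ll|\zeta|\ll\tau$, where the integrands behave like $|\zeta|^{-1}(\tau^2-\zeta^2)^{-1/2}$. Since the leading $1/\zeta$ tail coefficient equals $1$ for both $\varphi_1$ and $\varphi_2$, the numerators agree at leading order, $I_1+I_3\sim J_1+J_3\sim 2\ln\tau/\tau$, and the sign of $Q$ is decided by the denominators:
\[
  Q\sim 2\ln\tau\Bigl(\tfrac{1}{C_2^I}-\tfrac{1}{C_2^J}\Bigr),\quad
  C_2^I=\int_{-\alpha}^{\beta}\frac{(\zeta+\alpha)^{1/2}\,d\zeta}{(\beta-\zeta)^{1/2}(\zeta^2+4)^{1/2}},\quad
  C_2^J=\int_{-\alpha}^{\beta}\frac{(\beta-\zeta)^{1/2}\,d\zeta}{(\zeta+\alpha)^{1/2}(\zeta^2+4)^{1/2}}.
\]
Combining the two integrals and substituting $\zeta=m+s$ with $m=\tfrac{\beta-\alpha}{2}$ and $L=\tfrac{\alpha+\beta}{2}$ (so that $(\zeta+\alpha)(\beta-\zeta)=L^2-s^2$ and $\beta-\alpha-2\zeta=-2s$) turns the difference into
\[
  C_2^J-C_2^I=2\int_0^{L}\frac{s}{\sqrt{L^2-s^2}}\Bigl(\tfrac{1}{\sqrt{(m-s)^2+4}}-\tfrac{1}{\sqrt{(m+s)^2+4}}\Bigr)\,ds,
\]
whose integrand is positive on $(0,L)$ because $m>0$ forces $(m+s)^2>(m-s)^2$. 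Hence $C_2^J>C_2^I$, so $Q\to+\infty$ as $\tau\to\infty$.

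With $Q<0$ near $\tau=\beta$ and $Q\to+\infty$ as $\tau\to\infty$, continuity gives a $\tau\in(\beta,\infty)$, i.e.\ a value of $t$, solving $Q=0$, which is the claim. The crux is entirely in the $\tau\to\infty$ analysis: one must verify carefully that the logarithmically divergent numerators cancel at leading order so that the sign is governed by the finite denominator constants $C_2^I,C_2^J$, and one should confirm that the admissible range of $t$ is genuinely all of $(\beta,\infty)$ so that no solution is missed at the endpoints.
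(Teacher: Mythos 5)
Your proposal is correct and follows essentially the same route as the paper: the intermediate value theorem applied to $Q$ as a function of $\tau$ on $(\beta,\infty)$, with $Q<0$ as $\tau\to\beta^+$ because $I_2$ blows up while $J_3\to 0$ and everything else stays finite and positive, and $Q\to+\infty$ as $\tau\to\infty$ because the numerators share the same leading $2\ln\tau/\tau$ asymptotics while $\lim\tau I_2<\lim\tau J_2$, which you prove by exactly the same symmetrization of the difference integral that the paper uses. The only (harmless) difference is the bookkeeping at infinity: the paper isolates the bounded quantities $\tau(I_1-J_1)$ and $\tau(I_3-J_3)$ via dominated convergence and writes $Q$ as a bounded term plus $\tau(J_1+J_3)\bigl[(\tau I_2)^{-1}-(\tau J_2)^{-1}\bigr]$, whereas you only match the leading logarithmic coefficients of the two numerators, which indeed suffices since the denominators converge to distinct positive constants.
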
 

The case $a = b$ has been discussed in Section~\ref{sec:period}.  The case $a <
b$, as well as the existence of $\oH$, follows from the continuity of $Q$ in
$t$, and the following proposition.

\begin{proposition} \label{prop:lim1}
	If $1/t<1/a<b<t$ and $a<b$ then
	\begin{align}
		\lim_{t \to b+} Q(a,b;t) &< 0 \ ,\label{eq:ttob}\\
		\lim_{t \to +\infty} Q(a,b;t) &= +\infty \ .\label{eq:ttoi}
	\end{align}
\end{proposition}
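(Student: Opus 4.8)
The plan is to carry out the entire argument in the simplified $\zeta$-parametrization, where $I_k$ and $J_k$ become explicit real integrals of $|\varphi_1|$ and $|\varphi_2|$ over the three intervals cut out by the branch points $-\tau<-\alpha<\beta<\tau$: namely $(-\tau,-\alpha)$ for $k=1$, $(-\alpha,\beta)$ for $k=2$, and $(\beta,\tau)$ for $k=3$. On each interval one reads off the sign of every factor under the square roots and records the resulting positive integrand; for example
\[
	I_2=\int_{-\alpha}^{\beta}\frac{(\zeta+\alpha)^{1/2}\,d\zeta}{(\beta-\zeta)^{1/2}(\tau^2-\zeta^2)^{1/2}(\zeta^2+4)^{1/2}},
	\qquad
	J_2=\int_{-\alpha}^{\beta}\frac{(\beta-\zeta)^{1/2}\,d\zeta}{(\zeta+\alpha)^{1/2}(\tau^2-\zeta^2)^{1/2}(\zeta^2+4)^{1/2}},
\]
and the remaining four integrals follow the same recipe. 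The proposition is then an asymptotic analysis of these six integrals in the two regimes $\tau\to\beta+$ (that is, $t\to b+$) and $\tau\to+\infty$.

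For \eqref{eq:ttob} I would decide, integral by integral, whether it stays finite, diverges, or vanishes as $\tau\to\beta+$. The decisive phenomenon is the confluence of the moving branch point $\tau$ with the fixed one at $\beta$. In $I_2$ the factor $(\tau^2-\zeta^2)^{-1/2}$ develops a second inverse–square–root singularity at $\zeta=\beta$, exactly where $(\beta-\zeta)^{-1/2}$ already sits; two merging $-\tfrac12$ singularities produce a logarithmic blow-up, so $I_2\to+\infty$. In $J_2$ the exponent of $(\beta-\zeta)$ is $+\tfrac12$, which cancels the incoming singularity, so $J_2$ tends to a finite positive limit; likewise $I_1$, $I_3$ and $J_1$ tend to finite positive limits (the confluent singularities in $I_3$ only contribute a finite Beta integral), whereas in $J_3$ the vanishing factor $(\zeta-\beta)^{1/2}$ forces $J_3\to0$. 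Hence $(I_1+I_3)/I_2\to0$ while $(J_1+J_3)/J_2$ tends to the positive ratio $\bigl(\lim J_1\bigr)/\bigl(\lim J_2\bigr)$, and since $Q$ is the former minus the latter, \eqref{eq:ttob} follows.

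For \eqref{eq:ttoi} the subtlety is that \emph{both} quotients in $Q$ diverge, so one must compare leading coefficients. On the bounded interval $(-\alpha,\beta)$ the factor $(\tau^2-\zeta^2)^{-1/2}$ is uniformly $\sim\tau^{-1}$, giving $I_2\sim C_{I_2}\,\tau^{-1}$ and $J_2\sim C_{J_2}\,\tau^{-1}$ with
\[
	C_{I_2}=\int_{-\alpha}^{\beta}\frac{(\zeta+\alpha)^{1/2}\,d\zeta}{(\beta-\zeta)^{1/2}(\zeta^2+4)^{1/2}},
	\qquad
	C_{J_2}=\int_{-\alpha}^{\beta}\frac{(\beta-\zeta)^{1/2}\,d\zeta}{(\zeta+\alpha)^{1/2}(\zeta^2+4)^{1/2}}.
\]
For the other four integrals the dominant contribution comes from large $|\zeta|$, where each integrand behaves like $|\zeta|^{-1}(\tau^2-\zeta^2)^{-1/2}$; the model estimate $\int^{\tau}\zeta^{-1}(\tau^2-\zeta^2)^{-1/2}\,d\zeta\sim\tau^{-1}\ln\tau$ shows that $I_1,I_3,J_1,J_3$ all share the leading asymptotics $\tau^{-1}\ln\tau$ with the \emph{same} coefficient. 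Therefore $(I_1+I_3)/I_2\sim 2(\ln\tau)/C_{I_2}$ and $(J_1+J_3)/J_2\sim 2(\ln\tau)/C_{J_2}$, so
\[
	Q\sim 2\ln\tau\left(\frac{1}{C_{I_2}}-\frac{1}{C_{J_2}}\right),
\]
and everything reduces to the inequality $C_{I_2}<C_{J_2}$.

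This inequality is the main obstacle, and it is exactly where the hypothesis $a<b$ enters. I would subtract the two integrands, factoring out $(\zeta+\alpha)^{-1/2}(\beta-\zeta)^{-1/2}$, to obtain
\[
	C_{J_2}-C_{I_2}=\int_{-\alpha}^{\beta}\frac{\beta-\alpha-2\zeta}{(\zeta+\alpha)^{1/2}(\beta-\zeta)^{1/2}(\zeta^2+4)^{1/2}}\,d\zeta.
\]
Recentring by $\zeta=\tfrac{\beta-\alpha}{2}+s$, with $m=\tfrac{\beta-\alpha}{2}$ and $h=\tfrac{\beta+\alpha}{2}>0$, turns this into $\int_{-h}^{h}(-2s)(h^2-s^2)^{-1/2}((m+s)^2+4)^{-1/2}\,ds$, and pairing $s$ with $-s$ collapses it to
\[
	\int_{0}^{h}\frac{2s}{(h^2-s^2)^{1/2}}\left(\frac{1}{((m-s)^2+4)^{1/2}}-\frac{1}{((m+s)^2+4)^{1/2}}\right)ds,
\]
whose integrand is positive for every $s\in(0,h)$ precisely because $m>0$, i.e.\ $\beta>\alpha$, i.e.\ $a<b$ (then $(m+s)^2-(m-s)^2=4ms>0$). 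Thus $C_{J_2}>C_{I_2}$, giving $Q\to+\infty$ and proving \eqref{eq:ttoi}. The only genuinely delicate bookkeeping is the uniformity of the asymptotic expansions—controlling the error terms in the $\tau^{-1}\ln\tau$ estimates and confirming that the four lower-order integrals share a common leading coefficient—which I would handle by splitting each integral at a fixed large cutoff $M$ and estimating the two pieces separately.
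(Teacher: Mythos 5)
Your proposal is correct and follows essentially the same route as the paper: for $t\to b+$ the same identification of which periods stay finite, which vanish ($J_3$) and which blow up ($I_2$), and for $t\to+\infty$ the same rescaled limits $\lim\tau I_2$, $\lim\tau J_2$, the same logarithmic asymptotics for the numerators, and the identical recentring-and-pairing argument to get $\lim\tau I_2<\lim\tau J_2$ from $\alpha<\beta$. The only (cosmetic) difference is the final assembly: the paper rewrites $Q$ exactly as $\frac{\tau(I_1-J_1)+\tau(I_3-J_3)}{\tau I_2}+\tau(J_1+J_3)\bigl[\frac{1}{\tau I_2}-\frac{1}{\tau J_2}\bigr]$, using that the logarithms cancel in the differences $I_k-J_k$ so the first term stays bounded, whereas you compare leading coefficients of the two quotients directly and defer the error-term bookkeeping to a cutoff argument.
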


The remainder of this section is devoted to the proof of this proposition.

\medskip

\begin{proof}[Proof of~\eqref{eq:ttob}]
	The argument in~\cite{chenweber1} applies with slight modification.  As $t
	\to b+$, all periods have finite positive limits, with the exceptions
	$\lim_{t \to b+} J_3=0$ and $\lim_{t \to b+} I_2$ diverges to $+\infty$.
	Thus
	\[
		\lim_{t\to b+} \frac{I_1+I_3}{I_2} = 0 \quad\text{and}\quad \lim_{t\to b+} \frac{J_1+J_3}{J_2} >0 \ ,
	\]
	and \eqref{eq:ttob} follows.
\end{proof}

\begin{proof}[Proof of~\eqref{eq:ttoi}]
	The proof is similar to the argument in~\cite{chenweber1}.  Recall that the
	substitution $\zeta=z-1/z$ is monotonically increasing for $z>0$, and write
	$\alpha=a-1/a$, $\beta=b-1/b$, $\tau=t-1/t$ as before.

	For the periods in the denominators, we first note that
	\begin{align*} 
		\lim_{\tau \to \infty} \tau \cdot I_2(\alpha,\beta;\tau) &= \int_{-\alpha}^{\beta} \frac{1}{\sqrt{\zeta^2+4}} \sqrt\frac{\alpha+\zeta}{\beta-\zeta}\, d\zeta,\\
		\lim_{\tau \to \infty} \tau \cdot J_2(\alpha,\beta;\tau) &= \int_{-\alpha}^{\beta} \frac{1}{\sqrt{\zeta^2+4}} \sqrt\frac{\beta-\zeta}{\alpha+\zeta}\, d\zeta
	\end{align*}
	are all finite.  Their difference
	\begin{align*}
		\lim_{\tau \to \infty} \tau \cdot (I_2-J_2) &= \int_{-\alpha}^{\beta} \frac{2\zeta+\alpha-\beta}{\sqrt{(\zeta^2+4)(\alpha+\zeta)(\beta-\zeta)}}\, d\zeta\\
		&= \int_{-\gamma}^{\gamma} \frac{2\xi\, d\xi}{\sqrt{((\xi-\alpha/2+\beta/2)^2+4)(\gamma^2-\xi^2)}}\\
		&= \int_{0}^{\gamma} \frac{2\xi\, d\xi}{\sqrt{\gamma^2-\xi^2}}\Big(\frac{1}{\sqrt{(\xi-\alpha/2+\beta/2)^2+4}}-\frac{1}{\sqrt{(\xi+\alpha/2-\beta/2)^2+4}}\Big)\ ,
	\end{align*}
	where $\gamma = (\alpha+\beta)/2$ and $\xi = \zeta - (\beta - \alpha)/2$, is
	negative when $\alpha < \beta$.  Hence we have
	\begin{equation}\label{eq:denominator}
		\lim_{\tau \to \infty} \tau I_2 < \lim_{\tau \to \infty} \tau J_2
	\end{equation}
	for all $0<\alpha<\beta$.

	The periods in the numerators have logarithmic asymptotics.  For instance, as
	$\tau \to \infty$, 
	\begin{align*} 
		\tau \cdot J_3(\alpha,\beta;\tau)
		&= \int_\beta^\tau \frac{\tau}{\sqrt{\zeta^2+4}\sqrt{\tau^2-\zeta^2}} \sqrt\frac{\zeta-\beta}{\zeta+\alpha}\, dz \\
		&\sim \int_\beta^\tau \frac{\tau}{\zeta\sqrt{\tau^2-\zeta^2}} \, d\zeta\\
		&\sim \log t,
	\end{align*}
	hence $\tau \cdot I_3(\alpha,\beta;\tau)$ diverges to $+\infty$ as $\tau \to
	\infty$.  Fortunately, the integrals $I_1$ and $J_1$ (and $I_3$ and $J_3$)
	have the same logarithmic singularities.  By the dominated convergence
	theorem, we obtain the following limits:
	\begin{equation} \label{eq:numerator}
		\begin{aligned} 
			\lim_{\tau\to\infty} \tau \cdot (I_1-J_1)
			&= -\lim_{\tau\to\infty} \int_{-\tau}^{-\alpha} \frac{\tau(\alpha+\beta)}{\sqrt{(\tau^2-\zeta^2)(\zeta^2+4)(\beta-\zeta)(-\alpha-\zeta)}}\, d\zeta\\
			&= -\int_{-\infty}^{-\alpha} \frac{\alpha+\beta}{\sqrt{\zeta^2+4}\sqrt{\beta-\zeta}\sqrt{-\alpha-\zeta}}\, d\zeta,\\
			\lim_{\tau\to\infty} \tau \cdot (I_3-J_3)
			&= \lim_{\tau\to\infty} \int_{\beta}^{\tau} \frac{\tau(\alpha+\beta)}{\sqrt{\tau^2-\zeta^2}\sqrt{\zeta^2+4}\sqrt{\zeta-\beta}\sqrt{\zeta+\alpha}}\, d\zeta\\
			&= \int_{\beta}^\infty \frac{\alpha+\beta}{\sqrt{\zeta^2+4}\sqrt{\zeta-\beta}\sqrt{\zeta+\alpha}}\, d\zeta.
		\end{aligned}
	\end{equation}
	Note that both integrals are finite and non-zero.

	Finally, we write
	\[
		Q(\alpha,\beta;\tau) = \frac{\tau (I_1 - J_1) + \tau (I_3 - J_3)}{\tau I_2 } + \tau (J_1 + J_3) \Big[\frac{1}{\tau I_2 } - \frac{1}{\tau J_2} \Big].
	\]
	The part in the square bracket is positive by~\eqref{eq:denominator}.  As
	$\tau\to\infty$, the first fraction is finite by~\eqref{eq:numerator}, and
	$\tau (J_1+J_3) \to +\infty$.  This concludes the proof of the proposition.
\end{proof}

Before ending this section, we propose the following uniqueness conjecture
based on numeric experiments:
\begin{conjecture}\label{conj:unique}
	If $a < b$, then there exists a unique $t$ that solves the period
	condition~\eqref{eq:quotient}.
\end{conjecture}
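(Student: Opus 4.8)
The plan is to upgrade the intermediate-value argument behind Theorem~\ref{thm:exist} to a uniqueness statement by establishing a monotonicity of $Q$ in $t$. Since Proposition~\ref{prop:lim1} already fixes the signs of $Q$ at the two ends of $(b,\infty)$, it suffices to prove that $\partial_t Q>0$ at every zero of $Q$ (transversality): this makes the zeros isolated, with $Q$ changing sign from $-$ to $+$ at each, which together with the boundary signs forces exactly one zero. A cleaner sufficient target is that the ratio $R(t):=Q_I/Q_J=\frac{(I_1+I_3)J_2}{(J_1+J_3)I_2}$ be strictly increasing on $(b,\infty)$. Indeed $Q=Q_J\,(R-1)$ with $Q_J>0$, while the asymptotics in the proof of Proposition~\ref{prop:lim1} give $R\to 0$ as $t\to b^+$ and, by~\eqref{eq:denominator}, $R\to\bigl(\lim\tau J_2\bigr)/\bigl(\lim\tau I_2\bigr)>1$ as $t\to\infty$; hence strict monotonicity of $R$ produces a single crossing of $R=1$, equivalently a unique zero of $Q$.

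To attack the monotonicity of $R$, I would work throughout in the variable $\tau=t-1/t$ (monotone in $t$) and with the simplified forms $\varphi_1,\varphi_2$, where $I_1,I_2,I_3$ are integrals over the fixed-pattern intervals $[-\tau,-\alpha]$, $[-\alpha,\beta]$, $[\beta,\tau]$. The moving endpoints $\pm\tau$ together with the inverse-square-root singularities of $(\zeta^2-\tau^2)^{-1/2}$ there make naive differentiation under the integral sign delicate, so I would first normalize each interval to a fixed one by a trigonometric substitution such as $\zeta=\tfrac{\tau+\beta}{2}+\tfrac{\tau-\beta}{2}\sin\psi$ on $[\beta,\tau]$ (and analogously on $[-\tau,-\alpha]$), pushing all $\tau$-dependence into a smooth integrand and removing the endpoint singularities. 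After this, $\partial_\tau\log R>0$ becomes the inequality
\[
	\partial_\tau\log(I_1+I_3) + \partial_\tau\log J_2 \;>\; \partial_\tau\log(J_1+J_3) + \partial_\tau\log I_2 ,
\]
a comparison among logarithmic derivatives of elliptic integrals that can in principle be expanded using the tables in~\cite{byrd1971}.

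A structural tool I would exploit is the reflection symmetry $\zeta\mapsto-\zeta$ combined with $\alpha\leftrightarrow\beta$: it interchanges $\varphi_1$ and $\varphi_2$ and sends the $I$-intervals to the $J$-intervals, so that $J_1,J_2,J_3$ at $(\alpha,\beta;\tau)$ coincide with $I_3,I_2,I_1$ at $(\beta,\alpha;\tau)$. This reduces the whole comparison to a single family of integrals in which the hypothesis $\alpha<\beta$ enters as a definite sign, much as it did in the computation of $\lim\tau(I_2-J_2)$ in the proof of~\eqref{eq:ttoi}; one could then hope to write the difference of the two sides above as a single integral with a sign-definite integrand.

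The hard part will be precisely this last step. Unlike the clean single-interval difference $I_2-J_2$, the quantities $I_1+I_3$ and $J_1+J_3$ are sums over two disjoint intervals on which the integrands behave oppositely, and both $Q_I$ and $Q_J$ tend to $+\infty$, so no term-by-term monotonicity is available and the natural integrand need not be pointwise signed. I expect that a purely pointwise estimate will not suffice, and that one must either discover a global substitution or elliptic-integral identity that linearizes the comparison, or else settle the region $\{0<\alpha<\beta<\tau\}$ by a rigorous computer-assisted (interval-arithmetic) argument, treating the two understood degenerations $\tau\to\beta^+$ and $\tau\to\infty$ separately as boundary cases. The persistence of the conjecture suggests that this sign problem is the genuine analytic obstacle.
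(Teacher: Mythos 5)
This statement is labeled a \emph{conjecture} in the paper, and the authors offer no proof of it; they state explicitly in Section~\ref{sec:traizet} that they ``did not manage to prove'' it, and the only rigorous uniqueness result they obtain is at the degenerate Traizet limit $\alpha+\beta\to 0$ (Theorem~\ref{thm:1}), where the period condition collapses to the balance equation~\eqref{eq:traizet1} and uniqueness follows from convexity of $x\mapsto x\eta_3-\zeta(xT_3)$. Your proposal should therefore be judged on its own, and as it stands it is not a proof: the reduction to ``$\partial_t Q>0$ at every zero'' (or to strict monotonicity of $R=Q_I/Q_J$) is a correct and standard sufficient condition, and your verification of the boundary behavior $R\to 0$ as $t\to b^+$ and $R\to(\lim\tau J_2)/(\lim\tau I_2)>1$ as $t\to\infty$ is consistent with Proposition~\ref{prop:lim1} (note the latter also uses that $\tau(I_1+I_3)$ and $\tau(J_1+J_3)$ diverge with the same logarithmic leading term, so their ratio tends to $1$ --- you should say this explicitly). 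But the entire content of the conjecture is concentrated in the monotonicity inequality
\[
	\partial_\tau\log(I_1+I_3)+\partial_\tau\log J_2 \;>\; \partial_\tau\log(J_1+J_3)+\partial_\tau\log I_2,
\]
and you do not establish it; you candidly identify it as ``the hard part'' and speculate that a pointwise sign argument will fail. That is exactly the obstacle the authors could not overcome, so what you have written is a (reasonable) research plan, not a proof.

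Two smaller cautions. First, your symmetry $J_k(\alpha,\beta;\tau)=I_{4-k}(\beta,\alpha;\tau)$ is correct and potentially useful, but it exchanges the roles of $\alpha$ and $\beta$, so it converts a one-parameter monotonicity statement in $\tau$ into a comparison between two \emph{different} points of parameter space; it does not by itself produce a sign-definite integrand the way the single-interval computation of $\lim\tau(I_2-J_2)$ did. Second, the endpoint singularities at $\zeta=\pm\tau$ that you propose to remove by a trigonometric substitution are genuine, and after the substitution the $\tau$-derivatives of $I_1+I_3$ and $J_1+J_3$ will still individually diverge logarithmically in no limit but will be delicate near $\tau=\beta$, where $J_3\to 0$ and $I_2\to\infty$; any global monotonicity argument must survive that degeneration. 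If you want a provable partial result, the most promising route suggested by the paper is perturbative: combine the nondegeneracy at the Traizet limit with the implicit function theorem to get uniqueness for $\alpha+\beta$ small, which is weaker than the conjecture but is the only regime where the paper itself succeeds.
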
 

\section{Intersection with the Meeks-Locus}
\label{sec:elliptic}

We show in this section that $\overline\oH$ intersects $\oP$ in a 1-parameter
family.  To make this precise, we use on $\cO$ the topology induced by the
space of possible Weierstrass data, which are determined by the four real
parameters $a, b, t$ and $\rho$.  Clearly, the convergence of Weierstrass data
implies the locally uniform convergence of the minimal surfaces.

The goal is to explicitly determine the intersection of the Meeks locus 
\[
	\oP = \{(a,b,t): Q(a,b;t) = 0, a=b, 0<1/t<1/a<b<t\}
\]
with the closure of the non-Meeks locus
\[
	\oH = \{(a,b,t): Q(a,b;t) = 0, a \ne b, 0<1/t<1/a<b<t\}.
\]
Without loss of generality, we will focus on the case $a<b$ hence $\alpha <
\beta$.  The idea is to divide the function $Q(\alpha,\beta;\tau)$ by
$\beta-\alpha$ and take the limit for $\alpha \to \beta-$ to eliminate solutions
in the Meeks locus. We claim: 

\begin{theorem} \label{thm:elliptic}
	The intersection $\overline\oH \cap \oP$ is described by the equation
	\begin{equation} \label{eq:intersection}
		\KK(m_1) E(m_2) + m_2 \EE(m_1) K(m_2) = \KK(m_1) K(m_2),
	\end{equation}
	where
	\begin{align*} 
  	K(m) &= \int_0^{\pi/2} \frac{1}{\sqrt{1-m \sin^2(\theta)}}\, d\theta,  \\
  	E(m) &= \int_0^{\pi/2} {\sqrt{1-m \sin^2(\theta)}}\, d\theta
	\end{align*}
	are complete elliptic integrals of the first and the second kind,
	$\KK(m)=K(1-m)$ and $\EE(m)=E(1-m)$ are the associated elliptic integrals,
	using the moduli
	\[
  	m_1 =\frac{\alpha^2+4}{\tau^2+4}, \qquad
  	m_2 =\frac{\alpha^2}{\tau^2}\frac{\tau^2+4}{\alpha^2+4}.
	\]
	Note that $0<m_1,m_2<1$.
\end{theorem}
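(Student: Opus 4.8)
The plan is to exploit the fact, already noted in Section~\ref{sec:period}, that $Q(\alpha,\beta;\tau)$ vanishes identically on the Meeks locus $\alpha=\beta$. First I would record the clean structural reason for this: the substitution $\zeta\mapsto-\zeta$ reverses the three intervals $[-\tau,-\alpha]$, $[-\alpha,\beta]$, $[\beta,\tau]$ and, up to the irrelevant factor $\rho$, interchanges $\varphi_1$ with $\varphi_2$ while swapping $\alpha\leftrightarrow\beta$. This yields $I_1(\alpha,\beta)=J_3(\beta,\alpha)$, $I_2(\alpha,\beta)=J_2(\beta,\alpha)$ and $I_3(\alpha,\beta)=J_1(\beta,\alpha)$, whence $Q_I(\alpha,\beta)=Q_J(\beta,\alpha)$ and therefore $Q(\alpha,\beta;\tau)=-Q(\beta,\alpha;\tau)$. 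In particular $Q$ is divisible by $\beta-\alpha$, and since $Q$ is analytic in its parameters the quotient
\[
	q(\beta;\tau):=\lim_{\alpha\to\beta^-}\frac{Q(\alpha,\beta;\tau)}{\beta-\alpha}=-\partial_\alpha Q(\alpha,\beta;\tau)\big|_{\alpha=\beta}
\]
extends analytically across the diagonal.

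Next I would identify $\overline\oH\cap\oP$ with the zero set of $q$. For the forward inclusion, any point of $\overline\oH\cap\oP$ is a limit of genuine solutions $(\alpha_n,\beta_n;\tau_n)$ with $\alpha_n\ne\beta_n$ and $Q=0$; dividing by $\beta_n-\alpha_n\ne0$ gives $q=0$ along the sequence, and continuity of the extended $q$ forces $q=0$ in the limit. For the reverse inclusion, at a zero of $q$ with $\partial_\tau q\ne0$ the implicit function theorem produces, for each $\alpha$ slightly below $\beta$, a solution $\tau(\alpha,\beta)$ of $q=0$ with $\alpha\ne\beta$; these are genuine $\oH$ surfaces accumulating on the given diagonal point. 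Thus the intersection is cut out by the single equation $q(\beta;\tau)=0$.

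The remaining task is to evaluate $q$ on the diagonal. Writing $S_I=I_1+I_3$, $S_J=J_1+J_3$ and using $I_2=J_2=:D$, $S_I=S_J=:S$ at $\alpha=\beta$, differentiation of $Q=S_I/I_2-S_J/J_2$ gives
\[
	q=\frac{S\,(I_2'-J_2')-D\,(S_I'-S_J')}{D^2}\bigg|_{\alpha=\beta},
\]
where a prime denotes $\partial_\alpha$ at $\alpha=\beta$. Since $\partial_\alpha$ brings down a factor $\tfrac{1}{2}(\zeta+\alpha)^{-1}$ from $\varphi_1$ and $-\tfrac{1}{2}(\zeta+\alpha)^{-1}$ from $\varphi_2$, each derivative is again an integral of an algebraic differential on the curve $w^2=(\zeta^2-\alpha^2)(\zeta^2-\tau^2)(\zeta^2+4)$: the $I$-derivatives give a first-kind integrand $[(\zeta^2-\alpha^2)(\zeta^2-\tau^2)(\zeta^2+4)]^{-1/2}$, while the $J$-derivatives carry the extra rational factor $\tfrac{\zeta-\alpha}{\zeta+\alpha}$, producing second-kind contributions. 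After the substitution $u=\zeta^2$ the radicand becomes the cubic $(u-\alpha^2)(u-\tau^2)(u+4)$, so every period reduces to complete elliptic integrals whose moduli are the cross-ratios of the branch points $u\in\{-4,\alpha^2,\tau^2\}$; these are exactly $m_1$ and $m_2$. Collecting the reduced integrals and clearing denominators turns $q=0$ into~\eqref{eq:intersection}.

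The main obstacle is this last reduction. The derivatives $J_1',J_2'$ involve differentiating integrals whose integrand blows up like $(\zeta+\alpha)^{-1/2}$ at the moving endpoint $\zeta=-\alpha$, so a naive Leibniz rule produces a divergent boundary term that must cancel against a divergent part of the interior integral; I would avoid this by first rescaling each interval to a fixed one, for instance $\zeta=-\alpha+(\alpha+\beta)s$ with $s\in[0,1]$, before differentiating. Matching the four resulting integrals to the standard forms in~\cite{byrd1971} so as to recover the precise moduli $m_1,m_2$ rather than some M\"obius-equivalent pair, and tracking the signs introduced by the branch choices on each interval, is the delicate and computation-heavy step, which—following the paper's convention—I would carry out via the integral tables and only report the final identity.
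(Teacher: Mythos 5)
Your proposal follows essentially the same route as the paper: divide $Q$ by $\beta-\alpha$, extend the quotient analytically across the diagonal $\alpha=\beta$ (the paper via boundedness and Riemann removability, you via the cleaner antisymmetry $Q(\alpha,\beta;\tau)=-Q(\beta,\alpha;\tau)$), and evaluate the diagonal value by differentiating the period integrals and reducing them to complete elliptic integrals with the tables in \cite{byrd1971}. The one point you state as a hypothesis but do not verify --- the transversality $\partial_\tau q\ne 0$ needed for the reverse inclusion of the zero set into $\overline\oH\cap\oP$ --- is a point the paper itself leaves implicit, so your argument is correct to the same standard as the paper's own proof.
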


\begin{figure}[hbt]
	\includegraphics[width=0.4\textwidth]{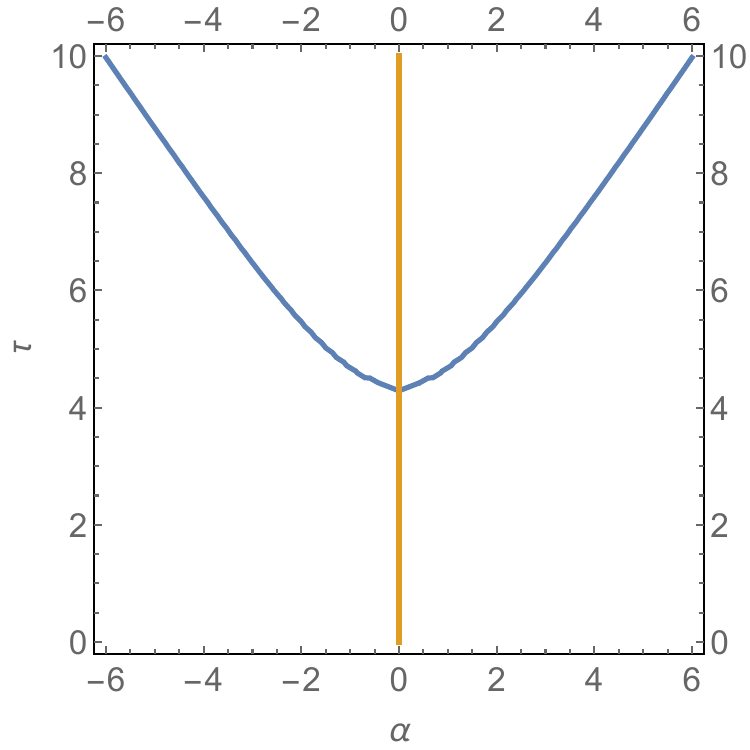}
	\caption{
		Solution set $(\alpha,\tau)$ to the period condition
		\eqref{eq:intersection} describing the intersection $\overline\oH \cap
		\oP$.
	}
	\label{fig:plotintersect}
\end{figure}

The theorem follows from the following proposition:

\begin{proposition}\label{prop:tildeQ}
	The function
	\[
		\tilde Q(\alpha,\beta;\tau) = \frac{Q(\alpha,\beta;\tau)}{\beta-\alpha}
	\]
	extends analytically to $\alpha=\beta$ by
	\[
		\tilde Q(\alpha,\alpha;\tau) = \frac{\tau^2}{\alpha^2}\frac{\tau}{\tau^2-\alpha^2}\sqrt{\frac{\alpha^2+4}{\tau^2+4}}  \frac{\KK(m_1) E(m_2) + m_2 \EE(m_1) K(m_2) - \KK(m_1) K(m_2)}{K(m_2)^2}.
	\]
\end{proposition}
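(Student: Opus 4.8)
The plan is to set $\rho=1$ (legitimate, since $Q$ is independent of the Lop\'ez--Ros factor) and to work throughout in the substituted variable $\zeta$, abbreviating $W(\zeta)=(\tau^2-\zeta^2)(\zeta^2+4)$. Over the three intervals $[-\tau,-\alpha]$, $[-\alpha,\beta]$, $[\beta,\tau]$ the six periods become explicit real integrals, and the same algebra already carried out in the proofs of~\eqref{eq:ttoi} and~\eqref{eq:numerator} gives the exact difference formulas
\[
	I_2-J_2=\int_{-\alpha}^{\beta}\frac{(2\zeta+\alpha-\beta)\,d\zeta}{\sqrt{(\zeta+\alpha)(\beta-\zeta)\,W(\zeta)}},\qquad
	I_3-J_3=(\alpha+\beta)\!\int_{\beta}^{\tau}\frac{d\zeta}{\sqrt{(\zeta+\alpha)(\zeta-\beta)\,W(\zeta)}},
\]
together with the analogous expression for $I_1-J_1$ over $[-\tau,-\alpha]$. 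Since $W$ is even, the reflection $\zeta\mapsto-\zeta$ interchanges the two outer intervals and shows that on the diagonal $\alpha=\beta$ one has $I_2=J_2$, $I_1=J_3$, $I_3=J_1$, hence $Q(\alpha,\alpha;\tau)=0$. As the periods depend analytically on $(\alpha,\beta,\tau)$ while $I_2,J_2>0$, the function $Q$ is analytic near the diagonal and vanishes there, so $\tilde Q=Q/(\beta-\alpha)$ extends analytically across $\alpha=\beta$.

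Next I would isolate the diagonal value algebraically. Writing $N_I=I_1+I_3$, $N_J=J_1+J_3$, $\Delta N=N_I-N_J$, $\Delta D=I_2-J_2$, and the averages $\bar N=\tfrac12(N_I+N_J)$, $\bar D=\tfrac12(I_2+J_2)$, a direct computation gives
\[
	Q=\frac{N_I}{I_2}-\frac{N_J}{J_2}=\frac{\bar D\,\Delta N-\bar N\,\Delta D}{I_2 J_2}.
\]
Both $\Delta N$ and $\Delta D$ vanish on the diagonal, so dividing by $\beta-\alpha$ and letting $\alpha\to\beta$ yields
\[
	\tilde Q(\alpha,\alpha;\tau)=\frac{D\,\Delta N'-N\,\Delta D'}{D^{2}},
\]
where $D=I_2|_{\alpha=\beta}=2\alpha\int_0^\alpha \frac{d\zeta}{\sqrt{(\alpha^2-\zeta^2)\,W(\zeta)}}$, $N=(I_1+I_3)|_{\alpha=\beta}$, and $\Delta D',\Delta N'$ denote the limits of $\Delta D/(\beta-\alpha)$ and $\Delta N/(\beta-\alpha)$. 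This already accounts for the factor $K(m_2)^2$ in the denominator: $D$ is a complete integral of the first kind that the tables in~\cite{byrd1971} reduce to a multiple of $K(m_2)$.

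To compute the four ingredients I would treat the diagonal values $D,N$ separately from the transverse derivatives $\Delta D',\Delta N'$. On the diagonal every integrand is even in $\zeta$, so the substitution $w=\zeta^2$ turns $D$ and $N$ into genuine elliptic integrals that reduce, via~\cite{byrd1971} and the moduli $m_1=(\alpha^2+4)/(\tau^2+4)$, $m_2=\tfrac{\alpha^2}{\tau^2}\tfrac{\tau^2+4}{\alpha^2+4}$, to $K(m_2)$ and to a combination of $K,E,\KK,\EE$. For $\Delta D'$ I would use the symmetric shift $\xi=\zeta-(\beta-\alpha)/2$, which centers the middle interval at $[-\gamma,\gamma]$ with $\gamma=(\alpha+\beta)/2$, replaces the numerator by $2\xi$ and $(\zeta+\alpha)(\beta-\zeta)$ by $\gamma^2-\xi^2$; expanding $W(\xi+\tfrac{\beta-\alpha}{2})^{-1/2}$ to first order in $\epsilon=\beta-\alpha$ kills the even part and leaves
\[
	\Delta D'=-\tfrac12\int_{-\alpha}^{\alpha}\frac{\xi\,W'(\xi)}{\sqrt{\alpha^2-\xi^2}\;W(\xi)^{3/2}}\,d\xi,
\]
a complete integral of the second kind after $w=\xi^2$. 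Reducing all four quantities to Legendre normal form and assembling $D\,\Delta N'-N\,\Delta D'$, the third-kind contributions are expected to cancel and the specific combination $\KK(m_1)E(m_2)+m_2\EE(m_1)K(m_2)-\KK(m_1)K(m_2)$ to appear, multiplied by the stated algebraic prefactor.

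The hard part is $\Delta N'$. Here $\beta$ occurs simultaneously as an integration limit and as an endpoint singularity $(\zeta-\beta)^{-1/2}$, the two outer intervals $[-\tau,-\alpha]$ and $[\beta,\tau]$ are not symmetric, and $W(\pm\tau)=0$, so both endpoints are singular and termwise differentiation under the integral sign is invalid; a naive expansion produces competing contributions of order $\sqrt{\epsilon}$ from the moving endpoints. The remedy is to reflect the $[-\tau,-\alpha]$ integral onto $[\alpha,\tau]$ by $\zeta\mapsto-\zeta$ and then apply the same symmetric shift, so that both outer integrals acquire the common lower limit $\gamma$; the $\sqrt{\epsilon}$ terms then cancel explicitly and the surviving $O(\epsilon)$ coefficient is a convergent integral of the same $W(\xi)^{-3/2}$ type as $\Delta D'$. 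I expect the dominant remaining effort to be the reduction of these second-kind integrals via~\cite{byrd1971} and the purely algebraic verification that the third-kind pieces cancel and that the result collapses to the stated closed form for $\tilde Q(\alpha,\alpha;\tau)$; this bookkeeping, rather than any conceptual difficulty, is presumably what the paper chooses to omit.
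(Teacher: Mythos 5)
Your plan follows essentially the same route as the paper's proof: observe that $Q$ vanishes identically on the diagonal $\alpha=\beta$, divide by $\beta-\alpha$, and identify the limit with a first-order (l'H\^opital) expression built from the diagonal values of the periods and their transverse derivatives, all evaluated as complete elliptic integrals via the tables in~\cite{byrd1971}. Your structural steps check out: the reflection $\zeta\mapsto-\zeta$ does give $I_2=J_2$, $I_1=J_3$, $I_3=J_1$ on the diagonal; the identity $Q=(\bar D\,\Delta N-\bar N\,\Delta D)/(I_2J_2)$ is correct; and your symmetrized limits $\Delta N'$, $\Delta D'$ coincide with the paper's one-sided derivatives $(I_1'+I_3')-(J_1'+J_3')$ and $I_2'-J_2'$ taken with respect to $\beta$ at $\beta=\alpha$. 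You are in fact more explicit than the paper about why the extension is analytic and why differentiating the improper integrals (with the moving, singular endpoint at $\beta$ and the zeros of $W$ at $\pm\tau$) is legitimate.

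The one genuine shortfall is that the decisive computation is never carried out. The content of the proposition is the explicit closed form, and your argument ends with the assertion that the reduction to Legendre normal form is ``expected to'' produce the combination $\KK(m_1)E(m_2)+m_2\EE(m_1)K(m_2)-\KK(m_1)K(m_2)$ with the stated prefactor. The paper's proof consists precisely of recording these evaluations: on the diagonal,
\[
	(I_1+I_3)=(J_1+J_3)=\frac{2}{\sqrt{\tau^2+4}}\,\KK(m_1),\qquad I_2=J_2=\frac{2\alpha}{\tau\sqrt{\alpha^2+4}}\,K(m_2),
\]
and for the $\beta$-derivatives at $\beta=\alpha$,
\[
	(I_1'+I_3')=0,\quad I_2'=\frac{K(m_2)}{\tau\sqrt{\alpha^2+4}},\quad
	(J_1'+J_3')=\frac{2\alpha}{(\tau^2-\alpha^2)\sqrt{\tau^2+4}}\Bigl(\KK(m_1)-\tfrac{\tau^2+4}{\alpha^2+4}\EE(m_1)\Bigr),
\]
together with $J_2'=\frac{2}{\tau\sqrt{\alpha^2+4}}\bigl(\frac{\tau^2}{\tau^2-\alpha^2}E(m_2)-\frac12 K(m_2)\bigr)$; the stated formula then follows from the quotient rule. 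Until you produce these four evaluations (or their symmetrized equivalents, e.g.\ your second-kind integral for $\Delta D'$ must reduce to $\frac{2}{\tau\sqrt{\alpha^2+4}}\bigl(K(m_2)-\frac{\tau^2}{\tau^2-\alpha^2}E(m_2)\bigr)$), the specific right-hand side of the proposition is not actually verified.
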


\begin{proof}
	With the help of the integral tables in~\cite{byrd1971}, we obtain the
	following explicit evaluation of the periods.
	\begin{align*}
		(I_1+I_3)(\alpha,\alpha;\tau) &= (J_1+J_3)(\alpha,\alpha;\tau) = \frac{2}{\sqrt{\tau^2+4}}\KK(m_1) \ ,\\
		I_2(\alpha,\alpha;\tau) &= J_2(\alpha,\alpha;\tau) = \frac{\alpha}{\tau}\frac{2}{\sqrt{\alpha^2+4}} K(m_2) \ .
	\end{align*}

	Then we evaluate the derivatives
	\[
		I'_k(\alpha,\alpha;\tau) = \frac{\partial}{\partial \beta} \Bigr|_{\beta=\alpha} I_k(\alpha,\beta;\tau),\qquad
		J'_k(\alpha,\alpha;\tau) = \frac{\partial}{\partial \beta} \Bigr|_{\beta=\alpha} J_k(\alpha,\beta;\tau),
	\]
	and obtain
	\begin{align*}
		(I'_1 + I'_3)(\alpha,\alpha;\tau) &= 0 \ ,\\
		I'_2(\alpha,\alpha;\tau) &= \frac{1}{\tau\sqrt{\alpha^2+4}} K(m_2) \ ,\\
		(J'_1 + J'_3)(\alpha,\alpha;\tau) &= \frac{\alpha}{\tau^2-\alpha^2}\frac{2}{\sqrt{\tau^2+4}}\Big( \KK(m_1) - \frac{\tau^2+4}{\alpha^2+4} \EE(m_1) \Big) \ ,\\
		J'_2(\alpha,\alpha;\tau) &= \frac{2}{\tau \sqrt{\alpha^2+4}} \Big( \frac{\tau^2}{\tau^2-\alpha^2} E(m_2) - \frac{1}{2} K(m_2) \Big) \ .
	\end{align*}

	Finally, by l'H\^opital,

	\begin{align*}
 		&\lim_{a \to b} \tilde Q(\alpha,\beta;\tau)
 		= \frac{\partial Q}{\partial \beta} \Bigr|_{\alpha=\beta}\\
		&= \frac{\tau^2}{\alpha^2}\frac{\tau}{\tau^2-\alpha^2}\sqrt{\frac{\alpha^2+4}{\tau^2+4}}  \frac{\KK(m_1) E(m_2) + m_2 \EE(m_1) K(m_2) - \KK(m_1) K(m_2)}{K(m_2)^2}.
	\end{align*}

	Now note that the function $\tilde Q$ can be extended to a holomorphic
	function of complex arguments $\alpha, \beta, \tau$. The computation above
	shows that it remains bounded for $\alpha=\beta$, and hence extends
	holomorphically to $\alpha=\beta$.  In particular, the extension for real
	arguments is real analytic.
\end{proof}

\begin{figure}[hbt]
	\includegraphics[width=0.48\textwidth]{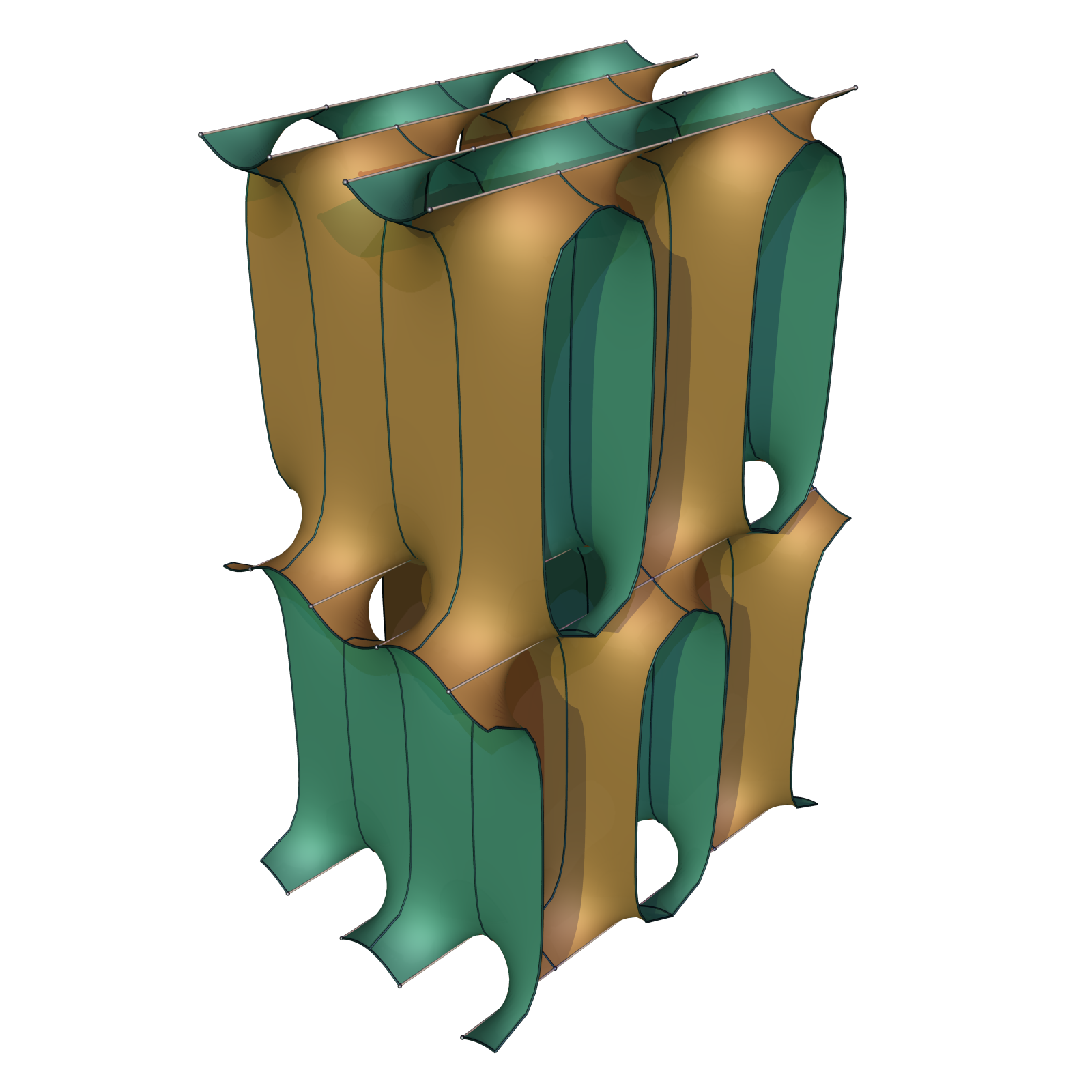}
	\includegraphics[width=0.48\textwidth]{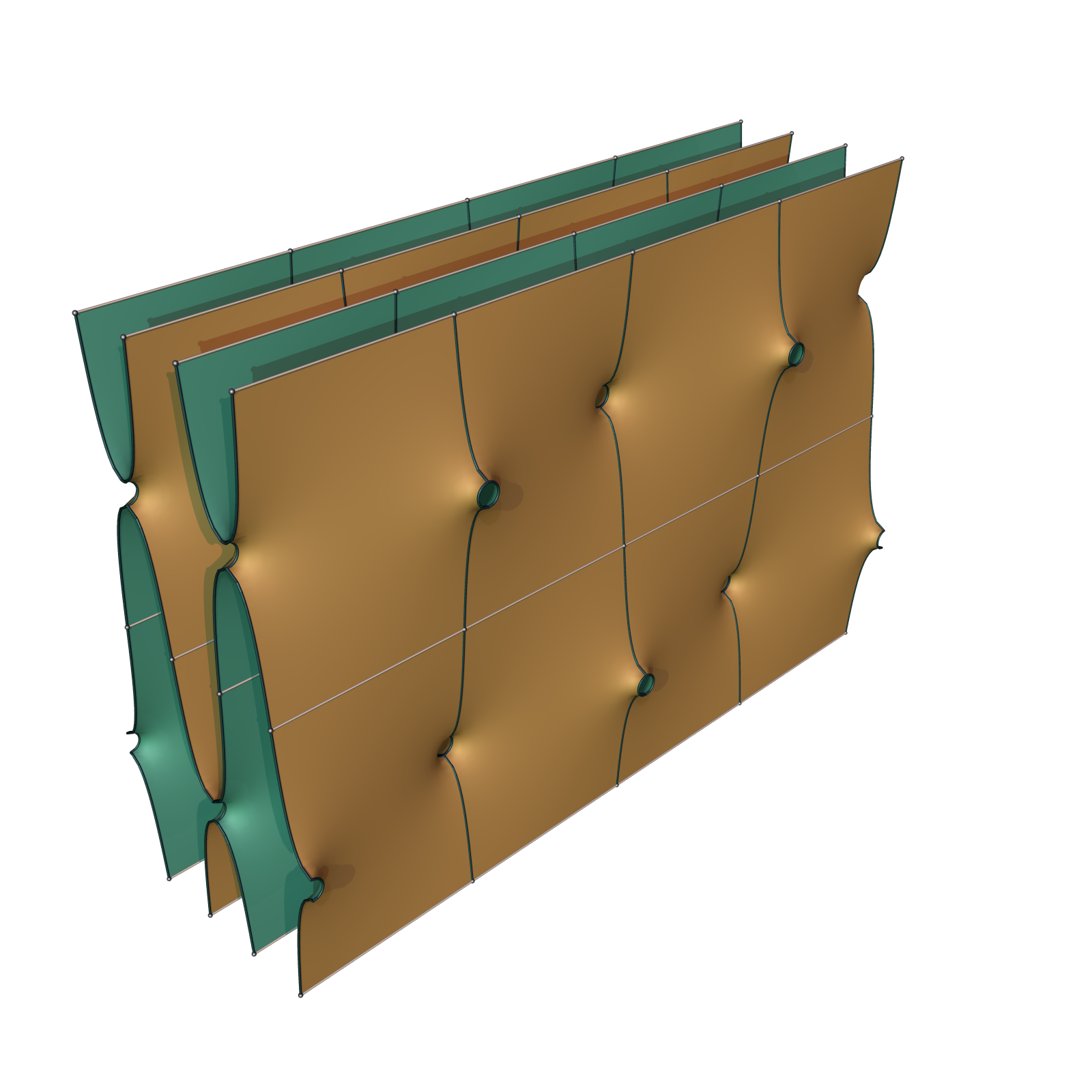}
	\caption{Two surfaces in the intersection of $\overline\oH \cap \oP$}
	\label{fig:intersect}
\end{figure}

The solution set to~\eqref{eq:intersection} is shown in
Figure~\ref{fig:plotintersect}.  In Figure \ref{fig:intersect}, we show two
surfaces in the intersection with extreme values of $\alpha$.  In the next
section, we will analyze the Traizet limit on the right (with small $\alpha$).
The left image strongly suggest that, in the limit of large $\alpha$, the
family tends to a doubly periodic Karcher-Meeks-Rosenberg surface of genus
1~\cite{karcher1988, karcher1989, meeks1989}. 

\section{Revisiting the Traizet limit}
\label{sec:traizet}

With our parametrization of $\oH$, the Traizet limit, with infinitesimally
small catenoid nodes, corresponds to the limit $ab\to 1$ or $\alpha+\beta\to
0$.  In this limit, the Gauss map $G(z)=i$ except at two singular points
$z=1/a=b$ and $z=-a=-1/b$.  So the octagon degenerates into the plane $y=0$ as
expected.  The angle of the limit rhombic torus can be computed as
\begin{equation}\label{eq:rhombicangle}
	\tan\frac{\theta}{2} = \frac{|\int_{-1/t}^{1/t}dh|}{|\int_{1/t}^{t}dh|} =
	\frac{K'(m)}{K(m)},
\end{equation}
where $m=\tau^2/(\tau^2+4)$.

From \eqref{eq:intersection}, we can already locate the Traizet limit of
$\overline\oH \cap \oP$.  First note that $m_2 \to 0$ when $\alpha = \beta \to
0$.  Divide both sides of \eqref{eq:intersection} by $m_2$ to eliminate the
trivial but meaningless solution at $\alpha=\beta=0$.  Recall
that~\cite{byrd1971} $(K(m)-E(m))/m \to \pi/4$ and $K(m) \to \pi/2$ as $m \to
0$.  Hence we obtain for the Traizet limit
\[
	2E(m) = K(m),
\]
where $m = \tau^2/(\tau^2+4)$.  This is uniquely solved with $\tau \approx
4.35932$ or $t \approx 4.57777$.  By putting these parameters
into~\eqref{eq:rhombicangle}, we recover the angle
\[
	\theta^* \approx 1.23409 \approx 70.7083^\circ.
\]

\medskip

While we did not manage to prove uniqueness Conjecture~\ref{conj:unique}, we
can however prove the uniqueness at the Traizet limit.

\begin{theorem}\label{thm:1}
	For any $0 < \theta < \theta^*$, there is a non-trivial solution
	$0<x<1/2$ that solves the balance equation~\eqref{eq:traizet1}. 
	This solution is unique and non-degenerate, hence is the Traizet limit for a
	family of TPMS.
\end{theorem}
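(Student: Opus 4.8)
The plan is to recast \eqref{eq:traizet1} as a single real equation and exploit convexity. Since $T_1 = e^{i\theta/2}$ and $T_2 = e^{-i\theta/2}$, we have $T_3 = -T_1 - T_2 = -2\cos(\theta/2) < 0$ for $\theta \in (0,\pi)$, and the lattice $\Lambda = \langle T_1, T_2\rangle$ is invariant under complex conjugation, so $\wp$ and $\zeta$ are real on the real axis and $T_3/2$ is the real half-period. Writing $e_3 = \wp(T_3/2)$ and $\eta_3 = 2\zeta(T_3/2)$, I set
\[
 F(x) = 2x\,\zeta(T_3/2) - \zeta(xT_3),
\]
so that \eqref{eq:traizet1} is equivalent to $F(x)=0$. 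Two boundary facts are immediate: $F(1/2) = \zeta(T_3/2) - \zeta(T_3/2) = 0$, so $x=1/2$ is always the trivial ($\oP$) root, while $\zeta(xT_3)\sim 1/(xT_3)\to-\infty$ as $x\to0^+$ forces $F(0^+) = +\infty$ and $F'(0^+) = -\infty$.

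The key structural observation is convexity. Using $\zeta' = -\wp$ one computes $F'(x) = 2\zeta(T_3/2) + T_3\wp(xT_3)$ and $F''(x) = T_3^2\,\wp'(xT_3)$. For $x\in(0,1/2)$ the argument $w = xT_3$ runs through $(T_3/2,0)$; since $\wp$ restricted to the reals strictly decreases from $+\infty$ at $0$ to $e_3$ at the half-period, $\wp'<0$ on $(0,\lvert T_3\rvert/2)$, and by oddness of $\wp'$ we get $\wp'>0$ on $(T_3/2,0)$. Hence $F''>0$ and $F$ is strictly convex on $(0,1/2)$. Combined with $F(0^+)=+\infty$, $F'(0^+)=-\infty$, and $F(1/2)=0$, elementary convex analysis shows that $F$ has a root in the \emph{open} interval $(0,1/2)$ if and only if $F'(1/2)>0$, and that in this case the root is unique and simple: the convex graph descends from $+\infty$, crosses zero once transversally, reaches a negative minimum, and climbs back to $0$ at $x=1/2$. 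This reduces both existence and uniqueness to a single sign.

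It remains to analyze the sign of $F'(1/2)=\eta_3 + T_3\,e_3$ as a function of $\theta$, and this is the main obstacle. I would show that $\theta \mapsto \eta_3 + T_3 e_3$ is positive for small $\theta$, vanishes at exactly one value $\theta^* \in (0,\pi)$, and is negative thereafter; the threshold $\theta^*\approx1.23409$ is precisely the degenerate torus $\T^*$, since $\eta_3 + T_3 e_3$ is, up to sign, the $T_3$-period of the $1$-form $(\wp - e_3)\,dw$, which has a double pole at $0$ and a double zero at the half-period. Two routes are available: a direct asymptotic analysis of $\eta_3(\theta)$ and $e_3(\theta)$ at the two degenerate tori $\theta\to0^+$ (thin lattice) and $\theta\to\pi^-$ (where $T_3\to0$), in the spirit of Section~\ref{sec:exist}; or a reduction to complete elliptic integrals, which should express $F'(1/2)=0$ through the very combination $\KK(m_1)E(m_2)+m_2\EE(m_1)K(m_2)-\KK(m_1)K(m_2)$ appearing in Theorem~\ref{thm:elliptic} and Proposition~\ref{prop:tildeQ}, thereby recovering $\theta^*$ as the Traizet limit of $\overline\oH\cap\oP$. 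Establishing the monotonicity of this combination in $\theta$ is where the real work lies.

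Finally, for non-degeneracy in the sense of \cite{traizet2008}, I would verify that the derivative of the balancing map $\Xi(x,y) = \zeta(xT_1+yT_2) - x\eta_1 - y\eta_2$ at the symmetric root $x=y=s$ is an $\R$-linear isomorphism. Here $\partial_x\Xi = v_1$ and $\partial_y\Xi = v_2$ are complex conjugates (because $T_2=\overline{T_1}$, $\eta_2=\overline{\eta_1}$, and $\wp(sT_3)\in\R$), so $\{v_1,v_2\}$ spans $\R^2$ exactly when $\re v_1\neq 0$ and $\im v_1\neq 0$. A short computation gives $2\re v_1 = v_1+v_2 = \eta_3 + T_3\wp(sT_3) = F'(s)$, which is nonzero because the root is simple; thus the symmetric direction is automatically non-degenerate. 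The remaining transverse condition $\im v_1\neq0$, equivalently $\sin(\theta/2)\,\wp(sT_3) + \im\eta_1\neq0$, I would establish by a companion sign estimate valid on the non-trivial locus for $\theta\in(0,\theta^*)$. Once both hold, Traizet's regeneration theorem produces the asserted family of TPMS.
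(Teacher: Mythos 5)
Your argument is essentially the paper's own: the same function $f(x)=x\eta_3-\zeta(xT_3)$, the same convexity computation $f''=T_3^2\wp'(xT_3)>0$ on $(0,1/2)$, the same reduction of existence and uniqueness to the sign of $f'(1/2)=\eta_3+T_3\wp(T_3/2)$, and the same framework for non-degeneracy (conjugate partial derivatives, with the real part controlled by simplicity of the root). Your convexity reasoning is carried out in more detail than the paper's, and your ``root in $(0,1/2)$ iff $f'(1/2)>0$'' equivalence is correct.

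Two points where you defer are worth flagging. First, the sign of $\eta_3+T_3\wp(T_3/2)$ for $\theta<\theta^*$: the paper simply asserts this, and in effect $\theta^*$ \emph{is} the unique angle where this quantity vanishes (it is the real period of the $1$-form with a double pole at $0$ and double zero at $T_3/2$ that characterizes $\T^*$), so your identification of the threshold is the right one; but neither you nor the paper proves monotonicity in $\theta$, so you are no worse off here. Second, and more substantively, the transverse non-degeneracy condition $\im v_1\ne 0$, which you leave as ``a companion sign estimate'': the paper closes this with a short monotonicity argument you should reproduce. Namely, $\partial_y F|_{y=0}=\eta_1-\eta_2+(T_1-T_2)\wp(xT_3)$ is purely imaginary, its $x$-derivative $(T_2^2-T_1^2)\wp'(xT_3)$ is purely imaginary of a fixed sign on $(0,1/2)$ (negative imaginary, since $T_2^2-T_1^2=(T_1-T_2)T_3$ with $T_3<0$ and $\wp'(xT_3)>0$), and its value at $x=1/2$ is positive imaginary; hence it is nonzero, and in fact positive imaginary, for all $0<x\le 1/2$. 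Without some such argument your non-degeneracy claim is incomplete, since Traizet's regeneration theorem requires the full Jacobian to be invertible, not just the symmetric direction.
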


% \begin{proposition}\label{prop:01}
% 	For a given $\theta$, there is a non-trivial solution
% 	$0<x<1/2$ that solves the balance equation~\eqref{eq:traizet1} if and only if
% 	\[
% 		g(\theta) = T_3\wp(T_3/2) + \eta_3 > 0.
% 	\]
% 	This solution is unique and non-degenerate, hence is the Traizet limit for a
% 	family of TPMS.
% \end{proposition}

\begin{proof}
	Recall that $T_{1,2} = \exp(\pm i \theta/2)$ and $T_3 = -T_1-T_2 =
	-2\cos(\theta/2)$.  We consider the function
	\[
		f(x;\theta) = x \eta_3(\theta) - \zeta(x T_3(\theta);\theta).
	\]
	Observe the following properties of $f$.
	\begin{itemize}
		\item $f \to +\infty$ as $x\to 0+$.  To see this, note that the lattice is
			spanned by conjugate vectors $T_1, T_2$, so $\zeta(z)$ is real for real
			$z$.  By definition, it has residue $+1$ at $0$.  The claim follows.

		\item $\partial_x^2 f>0$ for $0<x<1/2$.  This can be seen by noting that
			\[
				\frac{\partial^2 f}{\partial x^2} = T_3^2 \wp'(x T_3)
			\]
			is clearly non-zero.  So $f$ must be convex in $x$.
	\end{itemize}

	So there is a unique non-trivial solution of $f(x)=0$ if 
	\[
		g(\theta):=\frac{\partial f}{\partial x}\Big|_{x=1/2} = T_3\wp(T_3/2) +
		\eta_3 > 0,
	\]
	or no non-trivial solution otherwise.  These two cases are separated by the zeroes of $g(\theta)$.  They correspond to the Traizet limit of
	$\overline\oH \cap \oP$, which is uniquely given by $\theta^*$.  Since
	non-trivial solutions are known for $\theta = \pi/3 < \theta^*$, we conclude
	that $g(\theta) > 0$ if and only if $0<\theta<\theta^*$.

	For the non-degeneracy, we consider the function
	\[
		F(x,y;\theta) = (x+y) \eta_1(\theta) + (x-y) \eta_2(\theta) -
		\zeta((x+y)T_1(\theta) + (x-y)T_2(\theta);\theta) \ .
	\]
	Note that $F(x,0;\theta) = -f(x,\theta)$.  So the convexity of $f$ in $x$
	implies that $\partial_x F(x,0;\theta)$ is real negative at a non-trivial
	solution of~\eqref{eq:traizet1}.  On the other hand
	\[
		\frac{\partial F}{\partial y}\Bigr|_{y=0} = \eta_1 - \eta_2 + (T_1-T_2) \wp
		(x T_3)
	\]
	is positive purely imaginary for $0<x<1/2$.  This can be seen by noting that
	\[
		\frac{\partial}{\partial x} \frac{\partial F}{\partial y}\Bigr|_{y=0} =
		(T_2^2-T_1^2) \wp' (x T_3)
	\]
	is negative purely imaginary for $0<x<1/2$, and $\partial_y F$ is positive
	purely imaginary at $(x,y)=(1/2,0)$.  The non-degeneracy then follows
	readily.
\end{proof}

Note that, as $\theta$ approaches $\theta^*$, the non-trivial solution tends to
$1/2$, and $T_3 \wp(T_3/2)+\eta_3$ tends to $0$.  So the balanced configuration
at $(x,\theta)=(1/2,\theta^*)$ is degenerate.

\medskip

We may also recover solutions of~\eqref{eq:traizet1} in terms of our
parametrization.

\begin{proposition}
	The Traizet limit of $\oH$ is described by the equation
	\begin{equation}\label{eq:traizet2}
		(2\beta^2-\tau^2+4)K(m) = 2(\beta^2+4)\Pi(n,m),
	\end{equation}
	where $\Pi(n,m)$ is the complete elliptic integral of the third kind, with
	the characteristic $n = \tau^2/(\tau^2-\beta^2)>0$, and the modulus
	$m=\tau^2/(\tau^2+4)$.
\end{proposition}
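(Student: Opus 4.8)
The plan is to compute the limit of the period function $Q$ as the configuration degenerates and to identify its vanishing locus with \eqref{eq:traizet2}. I set $\epsilon=\alpha+\beta$ and let $\epsilon\to0^+$ while keeping $\beta$ and $\tau$ fixed (so $\alpha=\epsilon-\beta$); since, as the argument shows, $Q$ extends continuously to $\epsilon=0$, the limiting locus does not depend on the path chosen. Write $g=(\zeta+\alpha)^{1/2}(\zeta-\beta)^{-1/2}$ and $\psi=(\zeta^2-\tau^2)^{-1/2}(\zeta^2+4)^{-1/2}\,d\zeta$, so that (after setting $\rho=1$, harmless since $Q$ is independent of $\rho$) $\varphi_1=-g\,\psi$ and $\varphi_2=g^{-1}\psi$. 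At $\epsilon=0$ one has $g\equiv1$ on the two outer intervals $(-\tau,-\alpha)$ and $(\beta,\tau)$ carrying $I_1,J_1$ and $I_3,J_3$, while the middle interval $(-\alpha,\beta)$ carrying $I_2,J_2$ collapses to the point $\zeta=\beta$. Hence $I_1\to J_1$, $I_3\to J_3$ and $I_2,J_2\to0$, so $Q$ is an $\infty-\infty$ indeterminacy. I clear denominators, $Q=\bigl[(I_1+I_3)J_2-(J_1+J_3)I_2\bigr]/(I_2J_2)$; both $I_2$ and $J_2$ grow like $D_0\epsilon$ with the same leading coefficient $D_0=\tfrac{\pi}{2}w(\beta)$, where $w(\zeta)=[(\tau^2-\zeta^2)(\zeta^2+4)]^{-1/2}$, so the Traizet locus is precisely the zero set of the real-analytic function $C:=\lim_{\epsilon\to0}\epsilon^{-2}\bigl[(I_1+I_3)J_2-(J_1+J_3)I_2\bigr]$.

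The governing identity is $|g|-|g|^{-1}=\pm\,\epsilon\,\bigl|(\zeta+\alpha)(\zeta-\beta)\bigr|^{-1/2}$, the sign depending on the interval, which shows that every $O(\epsilon)$ discrepancy between an $I_k$ and the corresponding $J_k$ is driven by this single factor. Splitting $(I_1+I_3)J_2-(J_1+J_3)I_2=(I_1+I_3)(J_2-I_2)+\bigl[(I_1-J_1)+(I_3-J_3)\bigr]I_2$ and extracting the $\epsilon^2$ coefficients, I obtain three ingredients. The outer intervals contribute $D_0$ times $\lim_{\epsilon\to0}\epsilon^{-1}[(I_1-J_1)+(I_3-J_3)]$; since at $\epsilon=0$ the factor $\sqrt{(\zeta+\alpha)(\zeta-\beta)}$ becomes $|\zeta-\beta|$, this limit is the principal value $\mathrm{P.V.}\!\int_{-\tau}^{\tau} w(\zeta)/(\zeta-\beta)\,d\zeta$, an elliptic integral of the third kind. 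On the collapsing middle interval of length $\epsilon$ the leading term of $J_2-I_2$ vanishes by oddness about the interval's midpoint, and the next order is a Beta-function evaluation giving $J_2^{(2)}-I_2^{(2)}=-\tfrac{\pi}{4}\,w'(\beta)$, an algebraic expression in $\beta,\tau$. Finally the common prefactor is $N_0=(I_1+I_3)|_{\epsilon=0}=\int_{-\tau}^{\tau}w(\zeta)\,d\zeta=\tfrac{2}{\sqrt{\tau^2+4}}K(m)$ with $m=\tau^2/(\tau^2+4)$, a first-kind integral. Thus $C=N_0\bigl(J_2^{(2)}-I_2^{(2)}\bigr)+D_0\,\mathrm{P.V.}\!\int_{-\tau}^{\tau}\frac{w(\zeta)}{\zeta-\beta}\,d\zeta$.

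It then remains to set $C=0$ and reduce to Legendre form. Using the tables in~\cite{byrd1971}, the principal-value integral evaluates to a combination of $\Pi(n,m)$ and $K(m)$ with characteristic $n=\tau^2/(\tau^2-\beta^2)$ and the same modulus $m$, while the algebraic term proportional to $w'(\beta)$ produces the factor $2\beta^2-\tau^2+4$; together with $N_0$ these account for the factor $2(\beta^2+4)$. Clearing the common nonzero factor (which is why the statement is an equation rather than an explicit solution, exactly as in the passage from Proposition~\ref{prop:tildeQ} to \eqref{eq:intersection}) yields \eqref{eq:traizet2}. I expect the main obstacle to be twofold: first, rigorously justifying the indeterminate-form limit near the collapsing interval, in particular that the divergent endpoint contributions of $I_1-J_1$ and $I_3-J_3$ at $\zeta=\beta$ combine into a convergent principal value rather than permitting a naive interchange of limit and integral; and second, the bookkeeping in the third-kind reduction, where the correct characteristic $n$ and the cancellation of spurious $K$-terms must be tracked against the algebraic contribution from $w'(\beta)$. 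Once \eqref{eq:traizet2} is in hand, substituting $m=\tau^2/(\tau^2+4)$ into \eqref{eq:rhombicangle} recovers the rhombic angle $\theta$, completing the elliptic-integral description of the non-trivial locus.
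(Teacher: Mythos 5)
Your proposal is correct and follows essentially the same route as the paper: degenerate at $\alpha+\beta\to 0$, expand the periods to second order in $\alpha+\beta$ (the leading coefficients of $I_2,J_2$ agree, the second-order discrepancy of the middle interval produces the algebraic factor $2\beta^2-\tau^2+4$, and the outer intervals produce the third-kind integral with characteristic $n>1$, whence the principal value), then evaluate via the Byrd--Friedman tables. Your normalization $\epsilon^{-2}\bigl[(I_1+I_3)J_2-(J_1+J_3)I_2\bigr]$ differs from the paper's $\hat Q=\bigl(1/Q_I-1/Q_J\bigr)/(\alpha+\beta)^2$ only by the nonvanishing factor $-(I_1+I_3)(J_1+J_3)$, so the two have the same zero locus; the only small point to add is that the common factor you clear at the end contains $\beta$ itself, and discarding it is exactly what separates the trivial $\oP$ locus $\beta=0$ from the $\oH$ locus \eqref{eq:traizet2}.
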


\begin{figure}[hbt]
	\includegraphics[width=0.4\textwidth]{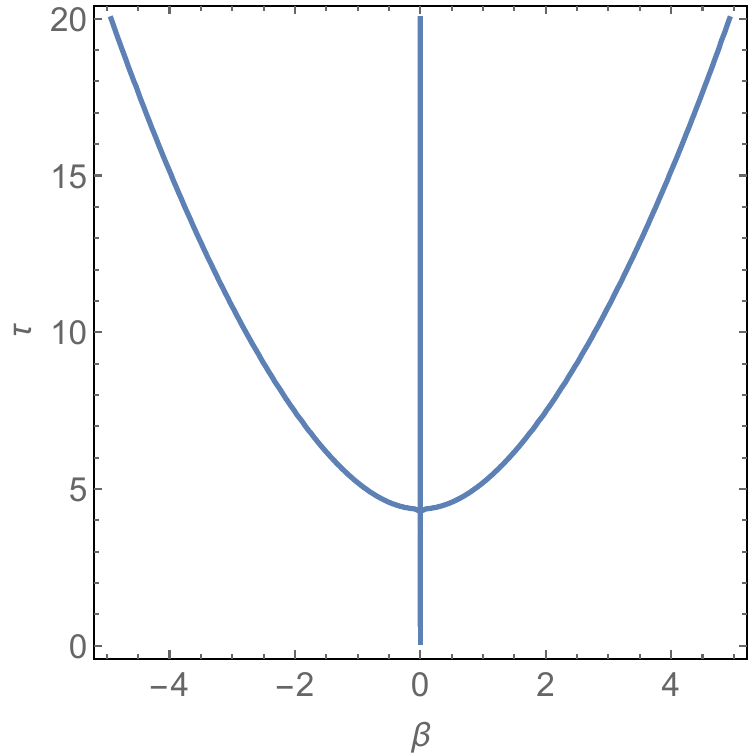}
	\caption{
		Solution set $(\beta,\tau)$ to the period condition \eqref{eq:traizet2}
		describing the Traizet limit of $\oH$, together with the trivial locus $\beta
		= 0$ describing the Traizet limit of $\oP$.  To compare with
		Figure~\ref{fig:diagsols}.
	}
	\label{fig:plottraizet2}
\end{figure}

\begin{proof}
	Again, with the help of the integral tables in \cite{byrd1971}, we obtain
	$I_2(-\beta,\beta;\tau)=J_2(-\beta,\beta;\tau)=0$ and
	\[
		(I_1+I_3)(-\beta,\beta;\tau) =
		(J_1+J_3)(-\beta,\beta;\tau)
		= \sqrt{1-m} K(m) \ ,
	\]
	and the derivatives up to order $2$ with respect to $\alpha$ at $\alpha = -\beta$
	\begin{align*}
		(I'_1 + I'_3)(\alpha,\beta;\tau) = (J'_1+J'_3)(\alpha,\beta;\tau) &= \frac{\beta}{\tau^2-\beta^2} \frac{1}{\sqrt{\tau^2+4}} \Pi(n,m) \ ,\\
		I'_2(\alpha,\beta;\tau) = J'_2(\alpha,\beta;\tau) &= \frac{\pi}{2}\frac{1}{\sqrt{(\tau^2-\beta^2)(\beta^2+4)}} \ ,\\
		I''_2(\alpha,\beta;\tau) = J''_2(\alpha,\beta;\tau)
		&= \frac{\pi}{2}\frac{\beta}{\sqrt{(\tau^2-\beta^2)(\beta^2+4)}}
		\Big(\frac{1}{\tau^2-\beta^2}-\frac{1}{\beta^2+4}\Big).
	\end{align*}

	We look at a modified period condition, namely
	\[
		\hat Q=\frac{1/Q_I-1/Q_J}{(\alpha+\beta)^2} = 0 \ .
	\]
	The evaluations above suffice to compute, by l'H\^opital, that
	\begin{align*}
		&\lim_{\alpha \to -\beta+} \hat Q(\alpha,\beta;\tau)
		= \frac{\partial^2}{\partial \alpha^2} \Bigr|_{\alpha = -\beta} (1/Q_I-1/Q_J)\\
		&= \frac{\pi\beta\sqrt{\tau^2+4}}{4(\beta^2+4)^{3/2}(\tau^2-\beta^2)^{3/2}}\frac{(2\beta^2-\tau^2+4)K(m) - 2(\beta^2+4)\Pi(n,m)}{K(m)^2}.
	\end{align*}

	Hence $\hat Q$ extends analytically to $\alpha + \beta = 0$.  Under the
	constraint $\tau > \beta$, we notice indeed two loci: $\beta=0$ for the
	Traizet limit of $\oP$, and the Traizet limit of $\oH$ must be described by
	\eqref{eq:traizet2}.
\end{proof}

And \eqref{eq:traizet2} must be describing the unique non-trivial locus of
\eqref{eq:traizet1}.  Its solution set is plotted in
Figure~\ref{fig:plottraizet2}.  Alternatively, \eqref{eq:traizet2} can also be
written in the forms
\[
	(\tau^2+4)K(m) = 2(\beta^2+4)\Pi(n',m)
\]
where $n' = (\tau^2-\beta^2)/(\tau^2+4)$, or
\[
	\Big(8\frac{\tau^2}{n''}+(\tau^2-4)(\beta^2+4)\Big)K(m) = 8\Big(\frac{\tau^2}{n''}-(\beta^2+4)\Big)\Pi(n'',m)
\]
where $n'' = \beta^2/(\beta^2+4)$.

To find the intersection with the trivial locus, let $\beta \to 0+$.  For the
three forms of~\eqref{eq:traizet2}, we recall, respectively, that
\begin{align*}
	\lim_{n\to 1+} \Pi(n,m) &= K(m) - \frac{E(m)}{1-m} & \text{\cite[(19.6,6)]{DLMF};}\\
	\lim_{n'\to m} \Pi(n',m) &= \frac{E(m)}{1-m} & \text{\cite[(19.6.1)]{DLMF};}\\
	\lim_{n''\to 0+} \frac{\Pi(n'',m)-K(m)}{n''} &= \frac{K(m)-E(m)}{m} & \text{cf.\ \cite[(733.00)]{byrd1971}.}
\end{align*}
Any one of these leads once again to
\[
	2E(m) = K(m).
\]

\begin{remark}
	The magic equation $2E(m) = K(m)$ also appeared in~\cite{chenweber1} for
	locating the bifurcation point in the $\tD$ family.
\end{remark}

\begin{remark}
	Assume that the limit torus is spanned by $T_1=1$ and $T_2=\tau$.  We have
	studied the rhombic case $|\tau|=1$.  Numerically, we find that if $\tau$ is
	taken from the colored region on the left in Figure~\ref{fig:general}, within
	the fundamental domain of the modular group, then there is unique non-trivial
	position $p_2(\tau)$ that solves Traizet's balance
	equation~\eqref{eq:balanced}.  The left boundary curve of this region, which
	is asymptotic to the circle $|z-1|=1$ as $|\tau|\to 0$, represents a
	one-parameter family of tori for which the trivial configuration $x=y=1/2$ is
	degenerate.  The image of the continuous map $\tau \mapsto p_2(\tau)$ is the
	colored region on the right in Figure~\ref{fig:general}.  The coloring should
	help to visualize the map.
\end{remark}

\begin{figure}[hbt]
	\includegraphics[width=0.4\textwidth]{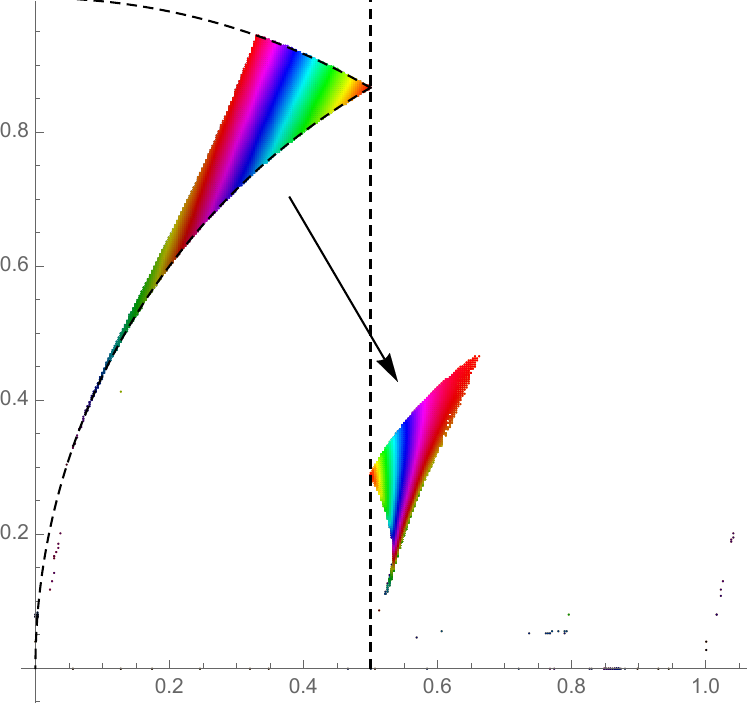}
	\caption{\label{fig:general}}
\end{figure}

\begin{remark}
	It recently comes to our attention that the solutions of Traizet's balance
	equation~\eqref{eq:balanced} have been systematically studied in the PDE
	contexts as the critical points of the Green function on flat
	tori~\cite{lin2010, chenzj2018, bergweiler2016}.  For a fixed torus, apart
	from the trivial solutions at the 2-division points, there could be at most
	one pair of non-trivial solutions.  In other words, the balancing equation
	has either three or five solutions.  The boundary between the two cases
	provides a 1-parameter family of degenerate balanced configuration.
\end{remark}

\medskip

Let $k_r$ denote the $r$-th \emph{elliptic integral singular values}, i.e.\
$K'(k_r^2)/K(k_r^2)=\sqrt{r}$.  A table of $k_r$ can be found
in~\cite[p.~95]{bowman1961} and \cite[(4.6.10)]{borwein1987}.

It was calculated by Legendre (see~\cite[\S 22.81]{whittaker1962}) that $k_3 =
(\sqrt{6}-\sqrt{2})/4$, hence $K'(m)/K(m) = 1/\sqrt{3}$ when $m = 1-k_3^2 =
(2+\sqrt{3})/4$.  We then see from~\eqref{eq:rhombicangle} that the rhombic
torus with $\theta = 60^\circ$ occurs when $\tau = 2(2+\sqrt{3})$.
Then~\eqref{eq:traizet2} is solved, very conveniently, with $\beta = 2$.  One
then verifies that the singular point at $\beta$ is mapped to one third of the
height of the box.  These are then explicit parameters for the Traizet limit of
Schwarz' $\H$ family.

We are now ready to prove:

\begin{theorem} 
	Schwarz $\H$ surfaces can be deformed within the set of TPMS of genus three
	into Meeks surfaces.
\end{theorem}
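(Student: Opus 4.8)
The plan is to carry out the entire deformation in the small-neck (Traizet) regime, where Traizet's existence theorem gives analytic control, and to use the intersection $\overline\oH \cap \oP$ from Theorem~\ref{thm:elliptic} as the bridge from the non-Meeks family $\oH$ to the Meeks family $\oP$. As a first step I would deform the given $\H$ surface within the one-parameter $\H$ family toward its catenoidal limit, which by the explicit computation above lies at $\theta = 60^\circ$ on the non-trivial locus~\eqref{eq:traizet2}. Since $60^\circ < \theta^*$, this balanced configuration is non-degenerate by Theorem~\ref{thm:1}, so Traizet's construction produces a genuine family of surfaces in $\oH$ limiting to it, parametrized by neck size; fixing a small neck size $\epsilon$ then singles out an $\H$ surface connected to the original one by a path inside the $\H$ family.

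Next, keeping the neck size fixed at $\epsilon$, I would sweep $\theta$ along the non-trivial locus from $60^\circ$ toward $\theta^*$. For every $\theta \in (0,\theta^*)$ Theorem~\ref{thm:1} provides a unique non-degenerate balanced configuration, and since Traizet's construction depends continuously on the configuration, the corresponding small-neck surfaces in $\oH$ vary continuously with $\theta$. This yields a continuous path of $\oH$ surfaces from the neighborhood of the $\H$ limit at $60^\circ$ into an arbitrarily small neighborhood of the degenerate limit at $\theta^*$.

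To cross into the Meeks family I would invoke Theorem~\ref{thm:elliptic}: the intersection $\overline\oH \cap \oP$ is a genuine one-parameter family of $\oP$, hence Meeks, surfaces, and its own Traizet limit is precisely $\theta^*$. Consequently, arbitrarily close to the degenerate configuration there are $\oP$ surfaces that arise as limits of $\oH$ surfaces under $a \to b$; passing to such a limit effects the continuous crossing from $\oH$ to $\oP$ inside $\overline\oH$. Once on $\oP$ one deforms freely within the two-parameter Meeks family, for example back to the Schwarz $\P$ surface, to finish the path.

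The \emph{main obstacle} is exactly this crossing at $\theta^*$, where the balanced configuration is degenerate, Traizet's implicit-function construction breaks down, and the $\oH$ and $\oP$ branches of small-neck surfaces cannot be matched through the usual local parametrization. The argument circumvents the singular point by never requiring a surface at $\theta^*$ itself: Theorem~\ref{thm:elliptic} supplies $\oP$ surfaces that are honest limits of $\oH$ surfaces throughout a neighborhood of the degenerate point, so the transition is made through the closure $\overline\oH$ rather than through the Traizet singularity. The remaining care is to verify that the three pieces — the deformation inside $\H$, the sweep along the non-trivial locus, and the passage to $\overline\oH \cap \oP$ — can be arranged to overlap in one common small-neck neighborhood of the $\theta^*$ configuration, so that they concatenate into a single continuous path in the space of TPMS of genus three.
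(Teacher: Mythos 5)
Your overall strategy --- push the $\H$ surface into the small-neck regime, travel through $\oH$ near the Traizet loci, and exit into $\oP$ at the intersection $\overline\oH \cap \oP$ --- is the same as the paper's, and you correctly identify the crux: the crossing happens where the balanced configuration degenerates ($\theta=\theta^*$), so Traizet's machinery cannot carry you across. But your proposed fix does not close this gap. Knowing from Theorem~\ref{thm:elliptic} that certain $\oP$ surfaces lie in $\overline\oH$ only tells you that each such surface is a \emph{sequential} limit of $\oH$ surfaces; it does not produce a continuous path from \emph{your} $\oH$ surface (the one reached by sweeping $\theta$ toward $\theta^*$ at fixed neck size) to that $\oP$ surface. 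Moreover, step 2 itself is only justified for $\theta$ in compact ranges bounded away from $\theta^*$: Traizet's theorem yields, for each non-degenerate configuration, surfaces for neck sizes below a threshold that may shrink to zero as the configuration degenerates, so ``fixed neck size $\epsilon$, $\theta$ arbitrarily close to $\theta^*$'' is precisely the regime where the continuity you invoke is unavailable. The concatenation you defer to ``remaining care'' is in fact the entire content of the crossing.

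The paper closes this gap with an argument that makes no use of Traizet's construction near the degenerate point. Fix $\alpha+\beta=\epsilon$ and consider the analytically continued period function $\tilde Q_\epsilon(\beta,\tau)=\tilde Q(\epsilon-\beta,\beta;\tau)$ on $\{\tau>\beta\ge\epsilon/2\}$. By Proposition~\ref{prop:lim1}, together with the analytic extension of $\tilde Q$ to $\alpha=\beta$ from Proposition~\ref{prop:tildeQ}, the function $\tilde Q_\epsilon$ is negative as $\tau\to\beta+$ and positive as $\tau\to\infty$ for \emph{every} $\beta\ge\epsilon/2$, including the boundary value $\beta=\epsilon/2$ where $\alpha=\beta$. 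Hence the real-analytic zero set of $\tilde Q_\epsilon$ contains a curve $\gamma$ separating these two ends, and $\gamma$ necessarily reaches the line $\beta=\epsilon/2$, i.e.\ terminates at an $\oP$ surface. That curve is the continuous path crossing from $\oH$ into $\oP$; Traizet's construction is used only as a local homeomorphism near the \emph{non-degenerate} Traizet limit of $\H$, to connect the $\H$ family to $\gamma$ in the first place. To repair your argument you would need to replace ``passing to such a limit effects the continuous crossing'' by a path-producing argument of this kind.
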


\begin{proof} 
	Within a sufficiently small neighborhood of a Traizet limit, Traizet's
	construction actually implies a homeomorphism between the space of TPMS of
	genus three and the space of $3$-tori.  This was not explicitly stated
	in~\cite{traizet2008}, but follows from his design of the Weierstrass data
	and the uniqueness in the implicit function theorem, as argued
	in~\cite{traizet2002}.  Let $U$ be such a neighborbood of the Traizet limit
	of $H$.  In particular, $U \cap \oH$ is connected.

	Now fix $\epsilon > 0$.  We consider the $\oH$ surfaces with $\alpha + \beta
	= \epsilon$.  The period condition for such surfaces is $\tilde
	Q_\epsilon(\beta,\tau) = \tilde Q(\epsilon-\beta, \beta; \tau) = 0$, defined
	on the region $\{(\beta, \tau) \in \R_+^2 \colon \tau > \beta \ge
	\epsilon/2\}$.  We have shown that $\tilde Q_\epsilon(\beta; \tau)$ is
	negative as $\tau$ approaches $\beta$, and positive as $\tau$ tends to
	infinity.  This holds, in particular, also for $\alpha = \beta = \epsilon/2$.
	Hence in the real analytic solution set of $\tilde Q_\epsilon = 0$, there
	must be a real analytic curve $\gamma$ that separates the line $\tau = \beta$
	from $\tau = \infty$.  If $\epsilon$ is sufficiently small, the curve
	$\gamma$ passes through $U$.

	So we deform an $\H$ surface first along the $\H$ family into $U$, then
	within $U \cap \oH$ onto the curve $\gamma$, finally along $\gamma$ until an
	$\oP$ surface.  The latter belongs to Meeks, which is connected.  Note that
	this deformation path is within $\oH$ until hitting $\oP$. 
\end{proof}

\begin{remark}
	It is easy to find $k_1=1/\sqrt{2}$, hence $K'(m)/K(m)=1$ when $m = 1-k_1^2 =
	1/2$.  We then see from~\eqref{eq:rhombicangle} that the rhombic torus
	becomes square when $\tau = 2$.  In this case, \eqref{eq:traizet2} has no
	solution with $\beta < \tau$.  So the only balanced configuration is with
	$\beta = 0$.  This is the Traizet limit of the tetragonal deformation family
	$\tP$ of Schwarz' $\P$ surface.
\end{remark}

\bibliography{References}
\bibliographystyle{alpha}

\end{document}